\documentclass[a4paper,11pt,twoside]{amsart}
\usepackage[latin1]{inputenc}
\usepackage{amsmath, amsfonts, amssymb,amsthm}
\usepackage[mathscr]{eucal}
\usepackage[pdftex]{graphicx}

\usepackage[T1]{fontenc}
\usepackage[sc]{mathpazo}
\linespread{1.05} 
\usepackage[all]{xy}

\usepackage{enumerate}

\newcommand{\N}{\mathbb{N}}
\newcommand{\Z}{\mathbb{Z}}
\newcommand{\R}{\mathbb{R}}
\newcommand{\C}{\mathbb{C}}
\newcommand{\s}{\mathbb{S}}
\newcommand{\h}{\mathbb{H}}

\newcommand{\df}{\mathrm{d}}

\newcommand{\prodesc}[2]{\left\langle #1, #2 \right \rangle}
\newcommand{\vprodesc}[2]{\langle\!\langle #1, #2 \rangle\!\rangle}
\newcommand{\abs}[1]{\left\lvert #1 \right\rvert}

\newtheorem{theorem}{Theorem}
\newtheorem*{theorem*}{Theorem}
\newtheorem{proposition}{Proposition}
\newtheorem{corollary}{Corollary}
\newtheorem{lemma}{Lemma}

\theoremstyle{definition}
  \newtheorem{definition}{Definition}

\theoremstyle{remark}
  \newtheorem{remark}{Remark}

\numberwithin{equation}{section}

\title{Minimal surfaces in $\s^2  \times \s^2$}

\author{Francisco Torralbo}
\address{Departamento de Geometr\'{\i}a  y Topolog\'{\i}a. Universidad de Granada. 18071 Granada, SPAIN}
\email{ftorralbo@ugr.es}
\thanks{Research partially supported by a MCyT-Feder research project MTM2011-22547 and Junta Andalucía Grants P09-FQM-5088 and P09-FQM-4496.}

\author{Francisco Urbano}
\address{Departamento de Geometr\'{\i}a  y Topolog\'{\i}a.
Universidad de Granada. 18071 Granada, SPAIN}
\email{furbano@ugr.es}

\subjclass[2000]{Primary 53C42; Secondary 53C40}

\keywords{Surfaces, minimal, complex surfaces}

\hyphenation{}

\begin{document}
\maketitle

\begin{abstract}
A general study of minimal surfaces of the Riemannian product of two spheres $\s^2\times\s^2$ is tackled. We stablish a local correspondence between (non-complex) minimal surfaces of $\s^2 \times \s^2$ and certain pair of minimal surfaces of the sphere $\mathbb{S}^3$. This correspondence also allows us to link minimal surfaces in $\mathbb{S}^3$ and in the Riemannian product $\s^2 \times \R$. Some rigidity results for compact minimal surfaces are also obtained.
\end{abstract}

\section{Introduction}
The theory of minimal surfaces in $4$-dimensional Riemannian manifolds and more particularly in Einstein-Kähler surfaces is an interesting topic in submanifold theory which has been treated from different points of view (see~\cite{B, CU, EGT, MW, MU,W}). Besides the complex projective plane, there is only one more Hermitian symmetric space of compact type and complex dimension 2: the Einstein-Kähler surface $\s^2\times\s^2$. In \cite{CU} its minimal Lagrangian surfaces and in \cite{TU1} its surfaces with parallel mean curvature vector were studied in depth. Also, in \cite{TU2} was proved that the stable minimal compact surfaces of $\s^2\times\s^2$ are the complex ones.

In this paper the authors broach the general study of minimal surfaces of $\s^2\times\s^2$,
including those which are not full in $\s^2\times\s^2$, i.e., minimal surfaces of $\s^2\times\s^1$ (or minimal surfaces of its universal covering $\s^2\times\R$). Perhaps, the most interesting result in the paper is that, roughly speaking, there is a local correspondence between (non-complex) minimal surfaces of $\s^2\times\s^2$ and certain pair of minimal surfaces of the $3$-dimensional sphere $\s^3$. This correspondence is made through the Gauss map of the pair of minimal surfaces of the $3$-sphere.

The paper is organized as follows. Section~\ref{sec:preliminares} describes the most important geometric properties of the ambient manifold $\s^2\times\s^2$, including its two structures of Einstein-Kähler manifold, as well as its identification with the Grasmannian manifold of oriented  $2$-planes of $\R^4$. This identification will be crucial to understand the aforementioned correspondence. The two complex structures of $\s^2\times\s^2$ allow to define on any minimal surface $\Sigma$ of $\s^2\times\s^2$ two Kähler functions $C_1,C_2:\Sigma\rightarrow [-1,1]$ which will play an important role along the paper. We finish Section~\ref{sec:preliminares} by exposing in detail the most regular examples of minimal surfaces of $\s^2\times\s^2$: slices, the diagonal $\textbf{D}$, the Clifford torus $\textbf{T}$ and certain complex tori constructed with the Weierstrass $\wp$-function.

In Section~\ref{sec:structure-equation} we study the Frenet equations and the compatibility equations of a minimal surface $\Sigma$ of $\s^2\times\s^2$. We also define the holomorphic Hopf differential associated to such $\Sigma$ and we show that complex surfaces are just those with vanishing Hopf differential. In Lemma~\ref{lm:properties-fundamental-data} we obtain certain differential equations that must be satisfied by the Kähler functions of $\Sigma$ which allow to compute, when $\Sigma$ is compact, the number of complex points of $\Sigma$ in terms of the degrees of the two components of the immersion into $\s^2\times\s^2$ and the Euler numbers of $\Sigma$ and its normal bundle.

In Section~\ref{sec:local-results} we obtain certain local results which are important to understand the rest of the paper. So, in Proposition~\ref{prop:totallygeodesic}, slices, the diagonal $\textbf{D}$ and the Clifford torus $\textbf{T}$ are locally characterized respectively as the only minimal surfaces of $\s^2\times\s^2$ which are totally geodesic, with constant Kähler functions, or with constant Gauss and normal curvatures. Also, in Proposition~\ref{prop:curvaturanormalnula}, we characterize the non-full minimal surfaces of $\s^2\times\s^2$ as the only ones with $C_1^2=C_2^2$.
We finish this section associating to any minimal surface of $\s^2\times\s^2$ without complex points a pair of smooth functions which satisfy a sinh-Gordon equation (cf.\ Proposition~\ref{prop:sinh-Gordon}). This fact has a converse, because any two solutions of the sinh-Gordon equation have associated a $1$-parameter family of isometric minimal immersions in $\s^2\times\s^2$ (cf. Theorem~\ref{thm:sinh-Gordon->minima-s2xs2}).

In Section~\ref{sec:gauss-map} we study the local correspondence mentioned before. So, in Theorem~\ref{thm:aplicacion-gauss-minima-s3} we give a method to construct minimal surfaces of $\s^2\times\s^2$. In fact, given two minimal immersions of an oriented surface $\Sigma$ into the sphere $\s^3$ with conformal induced metrics and the same Hopf differentials, we construct, using their Gauss maps, a minimal immersion of $\Sigma$ into $\s^2\times\s^2$ whose induced metric is conformal to the above ones. In Theorem~\ref{thm:minima-s2xs2-localmente-aplicacion-gauss} we prove a local converse of this fact, by stablishing that any simply connected minimal surface of $\s^2\times\s^2$ without complex points comes from a pair of minimal surfaces of $\s^3$ in the above way. It is interesting to remark that, as a consequence, minimal surfaces of $\s^2\times\R$ locally come from minimal surfaces of $\s^3$ through the Gauss map (see Corollary~\ref{cor:gauss-map-minimal-S2xR}).

Finally, Section~\ref{sec:compact} is devoted to the study of {\it compact} minimal surfaces of $\s^2\times\s^2$. In Proposition~\ref{prop:genus} we study some interesting properties of its compact complex surfaces related with their areas. As the area of any compact complex surface of $\s^2\times\s^2$ is a integer multiple of $4\pi$, we classify those with area less than $16\pi$ and with area $16\pi$ under the assumptions of embedeness. Among surfaces that appear in the last case, there are the Weierstrass tori mentioned before. The last two theorems of the paper are global rigidity results involving the Gauss and normal curvatures and can be summarized as follows:
\begin{quote}
\begin{enumerate}
{\it \item The slices and the diagonal $\textbf{D}$ are the only compact minimal surfaces of $\s^2\times\s^2$ either with positive constant Gauss curvature or whose Gauss curvature satisfies $K \geq 1/2$.
\item The Clifford torus $\textbf{T}$ is the only flat compact minimal surface of $\s^2\times\s^2$ and  $\textbf{T}$ and $\textbf{D}$ are the only compact minimal surfaces satisfying $0\leq K\leq 1/2$.

\item Any minimal torus of $\s^2\times\s^2$ with normal curvature $K^{\perp}=0$ lies in a totally geodesic hypersurface of $\s^2\times\s^2$.

\item For any orientable compact minimal surface of $\s^2\times\s^2$, the functions $K\pm K^{\perp}$ cannot be everywhere negative and $K\pm K^{\perp}\geq 0$ if and only if the surface is either complex or Lagrangian.}
\end{enumerate}
\end{quote}

\section{Preliminaries}\label{sec:preliminares}
Let $\s^2$ be the $2$-dimensional sphere, $\prodesc{\,}{\,}$ its standard metric, $J$ its complex structure and $\omega$ its Kähler $2$-form, i.e. $\omega(\,,\, ) = \prodesc{J\,}{\,}$.

We endow $\s^2\times\s^2$ with the product metric (also denote by $\prodesc{\,}{\,}$) and the complex structures
\[
J_1=(J,J) \quad \text{and}\quad  J_2=(J,-J),
\]
which define two structures of Kähler surface on $\s^2\times\s^2$. It is clear that if $\mathrm{Id}:\s^2 \rightarrow \s^2$ is the identity map and $F:\s^2 \rightarrow \s^2$ is any anti-holomorphic isometry, then $(\mathrm{Id},F):\s^2 \times \s^2 \rightarrow \s^2 \times \s^2$ is a holomorphic isometry from $(\s^2 \times \s^2,\langle,\rangle,J_1)$ onto $(\s^2 \times \s^2,\langle,\rangle,J_2)$. Also, it is clear that any isometry of $\s^2\times\s^2$ is holomorphic or antiholomorphic with respect to $J_1$ or $J_2$.

The Kähler 2-form $\omega_j$ of $J_j$, $j=1,2$,
are given by $\omega_1=\pi_1^*\omega+\pi_2^*\omega$ and $\omega_2=\pi_1^*\omega-\pi_2^*\omega$ and hence
\[
\omega_1\wedge\omega_1=-\omega_2\wedge\omega_2=2(\pi_1^*\omega\wedge\pi_2^*\omega),
\]
where $\pi_j$, $j = 1, 2$, are the projections onto each factors. Along the paper, $\pi_1^*\omega\wedge\pi_2^*\omega$ will be the orientation on $\s^ 2\times\s^2$.

On the other hand, using that $\s^2 \times \s^2$ is a product manifold, the curvature tensor $\bar{R}$ of $\s^2 \times \s^2$ is given by
\begin{equation}\label{eq:curvature-tensor}
\begin{split}
\bar{R}(x, y, z, w) = &\frac{1}{2}\bigl( \prodesc{x}{w}\prodesc{y}{z} -\prodesc{x}{z}\prodesc{y}{w} + \\
&+ \prodesc{J_1 x}{J_2w}\prodesc{J_1y}{J_2 z} - \prodesc{J_1x}{J_2z}\prodesc{J_1y}{J_2 w}\bigr),
\end{split}
\end{equation}
and hence $\s^2\times\s^2$ is an Einstein manifold with scalar curvature $4$.

To unsderstand the construction of minimal surfaces of $\s^2\times\s^2$ made in Section~\ref{sec:gauss-map}, we need to identify $\s^2\times\s^2$ with the Grassmann manifold $G^+(2,4)$ of oriented $2$-planes in the Euclidean space $\R^4$. In fact, let $\Lambda^2\R^4=\{v\wedge w \,\lvert\, v,w\in\R^4\}\equiv\R^6$ be the space of $2$-vectors in $\R^4$ endowed with the Euclidean metric $\vprodesc{\,}{\,}$ given by
\[
\vprodesc{v\wedge w}{v'\wedge w'} = \langle v,v'\rangle\langle w,w'\rangle
-\langle v,w'\rangle\langle w,v'\rangle.
\]
Then the star operator $*:\Lambda^2\R^4\rightarrow\Lambda^2\R^4$ is an isometry and $\Lambda^2\R^4=\Lambda^2_+\R^4\oplus\Lambda^2_-\R^4$ where $\Lambda^2_{\pm}\R^4$ are the eigenspaces of $*$ with eigenvalues $\pm 1$. It is clear that, if $\{e_1,e_2,e_3,e_4\}$ is an oriented orthonormal frame of $\R^4$, then the frame $\{E^\pm_j:\, j = 1,2,3\}$ given by:
\begin{eqnarray*}
E^{\pm}_1=\frac{\textstyle 1}{\textstyle \sqrt 2}(e_1\wedge e_2\pm e_3\wedge e_4)=\frac{\textstyle 1}{\textstyle \sqrt 2}(e_1\wedge e_2\pm *(e_1\wedge e_2)),\\
E^{\pm}_2=\frac{\textstyle 1}{\textstyle \sqrt 2}(e_1\wedge e_3\pm e_4\wedge e_2)=\frac{\textstyle 1}{\textstyle \sqrt 2}(e_1\wedge e_3\pm *( e_1\wedge e_3)), \\
E^{\pm}_3=\frac{\textstyle 1}{\textstyle \sqrt 2}(e_1\wedge e_4\pm e_2\wedge e_3)=\frac{\textstyle 1}{\textstyle \sqrt 2}(e_1\wedge e_4\pm *( e_1\wedge e_4)),
\end{eqnarray*}
is a orthonormal oriented reference of $\Lambda^2_{\pm}\R^4$. We denote by $\s^2_{\pm}$ the unit spheres in the $3$-spaces $\Lambda^2_{\pm}\R^4$. We will denoted by $I:\Lambda^2_{+}\R^4\rightarrow\Lambda^2_{-}\R^4$ the isometry defined by $I(E^+_i)=E^-_i$. Also, if $A\in O(4)$ is an orthogonal matrix, then $\hat{A}:\Lambda^2\R^4\rightarrow\Lambda^2\R^4$ defined by $\hat{A}(v\wedge w)=Av\wedge Aw$ is an isometry satisfying $*\hat{A}=(\det A)\hat{A}*$. Hence if $\det A=1$, then $\hat{A}(\s^2_{\pm})=\s^2_{\pm}$ and if $\det A=-1$, then $\hat{A}(\s^2_{\pm})=\s^2_{\mp}$.

Finally, if $\{v,w\}$ is an oriented orthonormal frame of a plane $P\in G^+(2,4)$, then the map $G^+(2,4)\rightarrow \s^2_+\times\s^2_-$ given by
\[
P \mapsto \frac{\textstyle 1}{\textstyle \sqrt {2}}\left( v\wedge w+*(v\wedge w),v\wedge w-*(v\wedge w)\right),
\]
is a diffeomorphism.

Let $\Phi:\Sigma\rightarrow \s^2 \times \s^2$ be an immersion of an oriented surface $\Sigma$. Associated to the two Kähler structures of $\s^2 \times \s^2$ there exist two \emph{Kähler functions} $C_1,C_2:\Sigma\rightarrow\R$, defined by
\[
\Phi^*\omega_j=C_j\omega_{\Sigma},\quad j=1,2,
\]
where $\omega_{\Sigma}$ is the area $2$-form of $\Sigma$. The immersion $\Phi$ is called \textbf{complex} if it is complex with respect to $J_1$ or $J_2$, i.e., either $C_1^2 = 1$ or $C_2^2 = 1$. Also, the immersion $\Phi$ is called \textbf{Lagrangian} if it is Lagrangian with respect to $J_1$ or $J_2$, i.e., either $C_1 = 0$ or $C_2 = 0$. It is clear that $C_j^2\leq1$ and the points where $C_j^2=1$ are the complex points of $\Phi$ with respect to the complex structure $J_j$. It is interesting to remark that $C_j^2$ is well defined even when the surface is not orientable.

If $\Phi=(\Phi_1,\Phi_2)$, then it is easy to check that the Jacobians of $\Phi_1, \Phi_2: \Sigma \rightarrow \s^2$  are given by
\[
\hbox{Jac}\,(\Phi_1)=\frac{C_1+C_2}{2},\quad\hbox{Jac}\,(\Phi_2)=\frac{C_1-C_2}{2}.
\]
Moreover, if $\Sigma$ is compact, denoting by $d_j$ the degree of the map $\Phi_j:\Sigma\rightarrow\s^2$, i.e.,  $\int_{\Sigma}\hbox{Jac}\,\Phi_j\,dA=4\pi d_j$, we obtain that
\begin{equation}\label{eq:integral-formula-C}
\int_{\Sigma}C_1\,dA=4\pi(d_1+d_2),\quad \int_{\Sigma}C_2\,dA=4\pi(d_1-d_2).
\end{equation}

In particular, if $\Phi$ is a complex immersion, then its area $A = 4\pi m$, $m \in \N$, and if $\Phi$ is Lagrangian then $\abs{d_1} = \abs{d_2}$.

Using~\eqref{eq:curvature-tensor}, the Gauss equation of the immersion $\Phi$ is given by
\[
K = \frac{1}{2}(C_1^2 + C_ 2^2) + 2|H|^2 - \frac{|\sigma|^2}{2},
\]
where $K$ is the Gauss curvature of $\Sigma$, $\sigma$ is the second fundamental form of $\Phi$ and $H$ is the mean curvature vector. Using again~\eqref{eq:curvature-tensor}, the Codazzi equation is given by:
\[
(\nabla \sigma)(x, y, z) - (\nabla \sigma)(y,x,z) = \frac{1}{2}\bigl( \prodesc{J_1x}{J_2 z}(J_2J_1y)^\perp  - \prodesc{J_1y}{J_2z}(J_2J_1 x)^\perp\bigr),
\]
where $(\,)^\perp$ denotes the normal component to the immersion.

Finally, if $\{e_1,e_2,e_3,e_4\}$ is an oriented orthonormal local frame on $\Phi^*T(\s^2 \times \s^2)$ such that $\{e_1,e_2\}$ is an oriented frame on $T\Sigma$, the {\em normal curvature} $K^{\perp}$ of the immersion $\Phi$ is defined by
\[
K^{\perp}=R^{\perp}(e_1,e_2,e_4,e_3),
\]
where $R^{\perp}$ is the curvature tensor of the normal connection. Hence, using again~\eqref{eq:curvature-tensor}, we get the Ricci equation
\[
K^\perp = \frac{1}{2}(C_1^2 - C_2^2) + \prodesc{[A_{e_4}, A_{e_3}]e_1}{e_2},
\]
where $A_{\eta}$ stands for the Weingarten endomorphism associated to the normal vector $\eta$.

\emph{In what follows $\Phi = (\Phi_1, \Phi_2): \Sigma \rightarrow \s^2 \times \s^2$ will be a minimal immersion, that is, $H=0$.}

Now, we are going to describe the most regular examples of minimal surfaces of $\s^2\times\s^2$, that will be characterize along the paper.

\subsection{Complex surfaces}\label{subsec:complex-surfaces}
The simplest examples of minimal surfaces of $\s^2 \times \s^2$ are the complex ones, because it is well known that a complex submanifold of a Kähler manifold is always minimal. These surfaces are characterized by the fact that each component $\Phi_j$ of $\Phi$ is a conformal map (holomorphic or antiholomorphic) whose branch points are disjoint. More precisely, if $C_1 = 1$ (resp. $C_1 = -1$) then $\Phi_1$ and $\Phi_2$ are holomorphic (resp. antiholomorphic) maps and if $C_2 = 1$ (resp. $C_2 = -1)$ then $\Phi_1$ is holomorphic (resp.\ antiholomorphic) map and $\Phi_2$ is antiholomorphic (resp.\ holomorphic) map.

Moreover, because $(\s^2\times \s^2, J_1)$ and $(\s^2\times \s^2, J_2)$ are biholomorphic, complex surfaces with respect to $J_1$ and $J_2$ are congruent. Among complex surfaces we mention three important examples:

\begin{description}
\item[Slices] For any point $p\in\s^2$, the associate  slices are given by
\[
\begin{split}
\s^2\times\{p\}&=\{(x,p)\in\s^2\times\s^2\,|\, x\in\s^2\}, \\
\{ p\}\times\s^2&=\{(p,x)\in\s^2\times\s^2\,|\, x\in\s^2\}.
\end{split}
\]
These slices are totally geodesic surfaces with constant Gauss curvature $K = 1$, normal curvature $K^\perp = 0$ and  area $A = 4\pi$. $\s^2\times\{p\}$ has $C_1=C_2=1$ and $\{ p\}\times\s^2$ has  $C_1=-C_2=1$. Moreover, the slices are the only minimal surfaces of $\s^2 \times \s^2$ which are complex with respect to both complex structures $J_1$ and $J_2$ (see Proposition~\ref{prop:totallygeodesic}).

\item[Diagonal] Let $\textbf{D}=\{(x,x)\in\s^2\times\s^2\,|\,x\in\s^2\}$ be the diagonal of $\s^2\times\s^2$. It is clear that \textbf{\textbf{D}} is a totally geodesic surface with constant Gauss curvature $K = 1/2$, normal curvature $K^\perp = 1/2$ and area $A=8\pi$ (see \cite[Proposition 2.3]{CU}). Moreover $C_1=1$ and $C_2=0$ and so $\textbf{D}$ is a complex surface with respect to $J_1$ and Lagrangian with respect to $J_2$ (it is also characterized by this property, cf.\ Proposition~\ref{prop:totallygeodesic}).

\item[Weierstrass tori] Let $\Sigma = \C/\Lambda$ be the torus generated by the lattice $\Lambda = \{m + n\tau:\, m,n\in \Z\}$, with $\tau$ a complex number with $\mathrm{Im}\,\tau > 0$. Let $\wp:\Sigma \rightarrow \s^2$ be the Weierstrass $\wp$-function with a double pole at the origin. Then its branch points are exactly the $4$ half periods of $\Lambda$. Now, consider a point $p_0$ in $\Sigma$ which is not a branch point of $\wp$ and let $F$ be the automorphism of $\Sigma$ that maps $p_0$ to the origin. Then $\wp \circ F: \Sigma \rightarrow \s^2$ is the Weierstrass $\wp$-function with a double pole in $p_0$ and which branch points are disjoint with that of $\wp$. Therefore,
\begin{quote}
\itshape
$\Phi = (\wp, \wp \circ F): \Sigma \rightarrow (\s^2 \times \s^2, J_1)$ is a holomorphic embedding with area $16\pi$.
\end{quote}
In fact, since the degree of each component of $\Phi$ is $2$, equation~\eqref{eq:integral-formula-C} ensures that its area is $16\pi$. Hence, the only property that remain to prove is that $\Phi$ is an embedding. Let $p, q \in \Sigma$ two different points with $\Phi(p) = \Phi(q)$. Then, $\wp(p) = \wp(q)$ and $\wp(F(p)) = \wp(F(q))$ and so $p$ and $q$ are not branch points neither of $\wp$ nor $\wp\circ F$. Since $\wp$ is an even function it follows easily that $p_0$ is a branch point of $\wp$, which is a contradiction.
 \end{description}

\subsection{Lagrangian minimal surfaces}\label{subsec:lagrangian-surfaces}
Another important family of minimal surfaces of $\s^2 \times \s^2$ are the Lagrangian ones. As in the complex case, Lagrangian surfaces with respect to $J_1$ are congruent to Lagrangian surfaces with respect to $J_2$.

We have seen that the diagonal \textbf{D} is a Lagrangian surface with respect to $C_2$. Another important example is the following:

\begin{description}
\item[Clifford torus] Let $\textbf{T}=\{(x,y)\in\s^2\times\s^2\,|\,x_1=y_1=0\}$ be the product $\s^1\times\s^1$ embedded in $\s^2\times\s^2$ as the product of two equators. Then \textbf{T} is a totally geodesic surface, flat and with normal curvature $K^\perp = 0$. Moreover $C_1=C_2=0$ and \textbf{T} is the only minimal surface of $\s^2\times\s^2$ which is Lagrangian with respect to both complex structures $J_1$ and $J_2$ (cf.\ Proposition~\ref{prop:totallygeodesic}).
\end{description}

\section{Structure equations of minimal surfaces}
\label{sec:structure-equation}

In this section we are going to study the Frenet equations of a minimal immersion in $\s^2\times\s^2$ in order to obtain the compatibility equations. We will follow the arguments and notation developed in~\cite[Section 3]{TU1}.

Let $\Phi = (\Phi_1, \Phi_2): \Sigma \rightarrow \s^2 \times \s^2$ be a minimal immersion of an oriented surface $\Sigma$. We consider a local isothermal parameter $z=x+iy$ on $\Sigma$ such that
\begin{align*}
\langle\Phi_z,\Phi_z\rangle=\langle(\Phi_1)_z,(\Phi_1)_z\rangle+\langle (\Phi_2)_z, (\Phi_2)_z\rangle=0,\\
|\Phi_z|^2=|(\Phi_1)_z|^2+|(\Phi_2)_z|^2 = \frac{e^{2u}}{2},
\end{align*}
where the derivatives with respect to $z$ and $\bar{z}$ are given by $\partial_z=\frac{1}{2}\left(\frac{\partial}{\partial x}-i\frac{\partial}{\partial y}\right)$ and $\partial_{\bar z}=\frac{1}{2}\left(\frac{\partial}{\partial x}+i\frac{\partial}{\partial y}\right)$.

Let $\{N, \tilde{N}\}$ be an orthonormal reference in the normal bundle such that $\{\Phi_x, \Phi_y, \tilde{N}, N\}$ is an oriented reference in $\Phi^*T(\s^2 \times \s^2)$ and consider
\[
\xi=\frac{1}{\sqrt{2}}(N-i\tilde{N}).
\]
Then we have that $|\xi|^2=1, \langle\xi,\xi\rangle=0$ and  $\{\xi,\bar{\xi}\}$ is an orthonormal \emph{local reference} of the complexified normal bundle.  Following the same reasoning of~\cite[Section 3]{TU1} it can be proved that
\begin{eqnarray}
J_1\Phi_z=iC_1\Phi_z+\gamma_1\xi,\quad\quad J_1\xi=-2e^{-2u}\bar{\gamma}_1\Phi_z-iC_1\xi,\label{eq:J1Phi-xi}\\
J_2\Phi_z=iC_2\Phi_z+\gamma_2\bar{\xi},\quad\quad J_2\xi=-2e^{-2u}\gamma_2\Phi_{\bar{z}}+iC_2\xi, \label{eq:J2Phi-xi}
\end{eqnarray}
for certain local complex functions $\gamma_j,\,j=1,2$, which satisfy $|\gamma_j|^2=e^{2u}(1-C_j^2)/2$. Notice that if we choose another orthonormal oriented reference in the normal bundle then the functions $\gamma_j$ change but the property that $\prodesc{J_1 \Phi_z}{\xi} = \prodesc{J_2 \Phi_z}{\bar{\xi}} = 0$ remains true (see Remark~\ref{rmk:change-orthonormal-reference} for more details).

If $\hat{\Phi}: = (\Phi_1, -\Phi_2)$, then $\{\Phi, \hat{\Phi}\}$ is an orthogonal reference along $\Phi$ of the normal bundle of $\s^2 \times \s^2$ in $\R^6$. Also, from~\eqref{eq:J1Phi-xi} and \eqref{eq:J2Phi-xi}, it follows
\[
\hat{\Phi}_z = -J_1J_2\Phi_z = C_1C_2 \Phi_z + 2e^{-2u}\gamma_1\gamma_2\Phi_{\bar{z}} - iC_2\gamma_1 \xi - iC_1 \gamma_2 \bar{\xi}.
\]

Using the above information we easily get that the Frenet equations of the minimal immersion $\Phi$ are given by
\begin{align*}
\Phi_{zz}&=2u_z\Phi_z+f_1\xi+f_2\bar{\xi}-\frac{\gamma_1\gamma_2}{2}\hat{\Phi},\\
\Phi_{z\bar{z}}&=-\frac{e^{2u}}{4}\Phi - \frac{e^{2u}}{4}C_1C_2\hat{\Phi},\\
\xi_z&=-2e^{-2u}f_2\Phi_{\bar{z}}+ A \xi + \frac{iC_1\gamma_2}{2}\hat{\Phi},\\
\xi_{\bar{z}} &= - 2e^{-2u} \bar{f}_1 \Phi_{z} -\bar{A} \xi - \frac{iC_2\bar{\gamma}_1}{2}\hat{\Phi},
\end{align*}
for certain local complex functions $A$ and $f_j,\,j=1,2$.

We will call the \emph{fundamental data} of the pair $(\Phi,\xi)$ to the tuple
\[
(u, A, C_j, \gamma_j, f_j:\, j = 1, 2).
\]

\begin{remark}\label{rmk:change-orthonormal-reference}
Notice that if $\{\eta, \bar{\eta}\}$ is another orthonormal oriented reference in the complexify normal bundle then $\eta = e^{i\theta} \xi$ for certain funcion $\theta$. In such case the fundamental data $(u, A^*, C_j, \gamma_j^*, f_j^*:\, j = 1, 2)$ of the pair $(\Phi, \eta)$ are related with the initial ones by
\begin{equation}\label{eq:change-orthonormal-reference}
\gamma_1^* = e^{-i\theta} \gamma_1, \, \gamma_2^* = e^{i\theta}\gamma_2, \, f_1^* = e^{-i\theta} f_1,\, f_2^* = e^{i\theta}f_2 \text{ and } A^* = i\theta_z + A.
\end{equation}
\end{remark}

\begin{proposition}\label{prop:fundamental-equations}
Let $\Phi: \Sigma \rightarrow \s^2 \times \s^2$ be a minimal immersion of an orientable surface $\Sigma$ and $(u, A, C_j, \gamma_j, f_j:\, j = 1,2)$ its fundamental data for a given orthonormal reference. Then:
\begin{equation}
\left\{
\begin{aligned}
(C_j)_z&=2ie^{-2u}f_j\bar{\gamma}_j, &
(f_j)_{\bar{z}}&=i\frac{e^{2u}}{4}C_j\gamma_j + (-1)^{j+1}\bar{A}f_j,\\
(\gamma_j)_{\bar{z}}&=(-1)^{j+1}\bar{A}\gamma_j,&
|\gamma_j|^2&=\frac{e^{2u}(1-C_j^2)}{2},
\end{aligned}
\right. \, j=1,2.
\label{eq:compatibility}
\end{equation}

Moreover, if $\Phi$ is a complex immersion with respect to $J_1$ (resp.\ $J_2$) then $C_1^2 = 1$ and $\gamma_1 = f_1 = 0$ (resp.\ $C_2^2 = 1$ and $\gamma_2 = f_2 = 0$).
\end{proposition}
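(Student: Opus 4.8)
The engine of the proof is the parallelism of $J_1$ and $J_2$ with respect to the Levi-Civita connection $\bar\nabla$ of $\s^2\times\s^2$, combined with the integrability of the Frenet system. The single fact I would set up first is the translation between the ambient $\R^6$-derivatives appearing in the Frenet equations and the intrinsic connection: for a field $Y$ tangent to $\s^2\times\s^2$ along $\Phi$, the tangential-to-$\s^2\times\s^2$ part of $Y_z$ is exactly $\bar\nabla_{\partial_z}Y$, the remaining part lying in $\mathrm{span}\{\Phi,\hat\Phi\}$. Reading off the Frenet equations this way gives the two facts I will use repeatedly: minimality makes $\Phi_{z\bar z}$ purely normal to $\s^2\times\s^2$, so $\bar\nabla_{\partial_{\bar z}}\Phi_z=0$, while $\bar\nabla_{\partial_z}\Phi_z=2u_z\Phi_z+f_1\xi+f_2\bar\xi$ and $\bar\nabla_{\partial_z}\xi=-2e^{-2u}f_2\Phi_{\bar z}+A\xi$ are obtained by deleting the $\hat\Phi$-terms.

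For the first and third identities in \eqref{eq:compatibility} the plan is to differentiate \eqref{eq:J1Phi-xi} with respect to $\bar z$ and project onto $\s^2\times\s^2$. On the left the projection is $\bar\nabla_{\partial_{\bar z}}(J_1\Phi_z)=J_1\bar\nabla_{\partial_{\bar z}}\Phi_z=0$ by parallelism and minimality; on the right I expand $\partial_{\bar z}(iC_1\Phi_z+\gamma_1\xi)$ using the Frenet equation for $\xi_{\bar z}$. The vanishing of the $\Phi_z$-coefficient gives $(C_1)_{\bar z}=-2ie^{-2u}\gamma_1\bar f_1$, whose conjugate is $(C_1)_z=2ie^{-2u}f_1\bar\gamma_1$, and the vanishing of the $\xi$-coefficient gives $(\gamma_1)_{\bar z}=\bar A\gamma_1$. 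Repeating this with \eqref{eq:J2Phi-xi} yields the $j=2$ equations; the sign $(-1)^{j+1}$ should emerge automatically because $J_2\Phi_z$ carries $\bar\xi$ in place of $\xi$ and the $\xi$-part of $J_2\xi$ is $+iC_2\xi$ (against $-iC_1\xi$ for $J_1$), flipping the result to $(\gamma_2)_{\bar z}=-\bar A\gamma_2$.

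The second identity is the only genuine computation and is where I expect the main obstacle, since it is a second-order rather than first-order condition and is not produced by parallelism alone (differentiating the $J_j\xi$ relations only reproduces equations one and three). I would instead impose the integrability relation $\Phi_{zz\bar z}=\Phi_{z\bar z z}$, differentiating the Frenet expression for $\Phi_{zz}$ in $\bar z$ and that for $\Phi_{z\bar z}$ in $z$, using all four Frenet equations together with the expression for $\hat\Phi_z$ and its conjugate, and then matching the $\xi$-coefficients. This should give $(f_1)_{\bar z}=\bar A f_1+\tfrac{i}{2}C_1\gamma_1|\gamma_2|^2+\tfrac{ie^{2u}}{4}C_1C_2^2\gamma_1$, and the two $\gamma$-terms must collapse by the constraint $|\gamma_2|^2=e^{2u}(1-C_2^2)/2$ coming from \eqref{eq:J1Phi-xi}--\eqref{eq:J2Phi-xi}: the bracket $(1-C_2^2)+C_2^2=1$ leaves exactly $(f_1)_{\bar z}=i\tfrac{e^{2u}}{4}C_1\gamma_1+\bar A f_1$. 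The $j=2$ case is identical after matching $\bar\xi$-coefficients and invoking $|\gamma_1|^2=e^{2u}(1-C_1^2)/2$, with the sign flipping to $-\bar A f_2$. The fourth identity needs nothing new, as it is already recorded in the derivation of \eqref{eq:J1Phi-xi}--\eqref{eq:J2Phi-xi}.

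For the final assertion, suppose $\Phi$ is complex with respect to $J_1$, so that $J_1$ preserves $T\Sigma$. Then $J_1\Phi_z$ is tangent to $\Sigma$ and its $\xi$-component in \eqref{eq:J1Phi-xi} must vanish, forcing $\gamma_1=0$; substituting into $|\gamma_1|^2=e^{2u}(1-C_1^2)/2$ gives $C_1^2=1$, so $C_1=\pm1$ is constant. Finally, with $\gamma_1=0$ relation \eqref{eq:J1Phi-xi} reduces to $J_1\Phi_z=iC_1\Phi_z$ and $J_1\xi=-iC_1\xi$; differentiating $J_1\Phi_z=iC_1\Phi_z$ in $z$ and comparing the $\xi$-components of the tangential parts (using parallelism and $J_1\bar\xi=iC_1\bar\xi$) yields $-iC_1f_1=iC_1f_1$, whence $f_1=0$. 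The statement for $J_2$ follows in the same way.
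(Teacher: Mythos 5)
Your proposal is correct and follows essentially the same route as the paper: the fourth identity from the definition of $\gamma_j$, the first and third from differentiating \eqref{eq:J1Phi-xi}--\eqref{eq:J2Phi-xi} against the Frenet equations, the second from the $\xi$- and $\bar\xi$-components of $\Phi_{zz\bar z}=\Phi_{z\bar z z}$, and the last assertion from the $J_j$-invariance of the tangent and normal bundles. The only (harmless) variation is at the very end, where the paper cites the standard identity $\sigma(J_1\cdot,\cdot)=J_1\sigma(\cdot,\cdot)$ for complex submanifolds to get $f_1=0$, while you rederive that fact directly from the parallelism of $J_1$.
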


\begin{proof}
Firstly, the forth equation of~\eqref{eq:compatibility} comes from the definition of $\gamma_j$ (see equations~\eqref{eq:J1Phi-xi} and~\eqref{eq:J2Phi-xi}). Secondly, derivating with respect to $z$ and $\bar{z}$ in~\eqref{eq:J1Phi-xi} and~\eqref{eq:J2Phi-xi} and taking into account the Frenet equations we easily get the first and third equations of~\eqref{eq:compatibility}. Finally, from the $\xi$ and $\bar{\xi}$ components of $\Phi_{zz\bar{z}} = \Phi_{z\bar{z}z}$ we obtain the equation for $(f_j)_{\bar{z}}$, $j = 1, 2$.

Now, let suppose that $\Phi$ is a complex immersion with respect to $J_1$ (the case for $J_2$ is analogous). Then, the tangent and the normal bundle are invariant by $J_1$ and so $C_1^2 = 1$ and $\gamma_1 = 0$ from~\eqref{eq:J1Phi-xi}. It is well known that in this case $\sigma(J_1-, -) = J_1\sigma(-,-)$, where $\sigma$ is the second fundamental form of $\Phi$, and so $f_1 =\prodesc{\Phi_{zz}}{\bar{\xi}} =0$ since  $\xi = \frac{1}{\sqrt{2}}(N + iC_1J_1N)$.
\end{proof}

Conversely, we get the following result.

\begin{proposition}\label{prop:compatibility-equations}
Let $\Sigma$ be a simply connected surface, $u, C_j: \Sigma \rightarrow \R$ with $C_j^2 \leq 1$, $C_j^2$ not constant $1$, and $\gamma_j, f_j, A:\Sigma \rightarrow \C$, $j = 1, 2$, functions satisfying~\eqref{eq:compatibility}. Then there exists, up to congruences, a unique non-complex minimal immersion $\Phi: \Sigma \rightarrow \s^2 \times \s^2$  and an orthonormal reference of the complexified normal bundle $\{\xi,\bar{\xi}\}$ whose fundamental data are $(u, A, C_j, \gamma_j, f_j:\, j = 1, 2)$.
\end{proposition}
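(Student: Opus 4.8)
The plan is to prove this as a Bonnet-type integration of the Frenet system, realising $\s^2\times\s^2$ inside $\R^6 = \R^3\times\R^3$. Along any minimal immersion the six complexified vector fields $\Phi$, $\hat\Phi = (\Phi_1,-\Phi_2)$, $\Phi_z$, $\Phi_{\bar z}$, $\xi$, $\bar\xi$ form a moving frame $\mathcal F$ of $\R^6\otimes\C$ ($\Phi,\hat\Phi$ being normal to $\s^2\times\s^2$ in $\R^6$, the other four spanning the complexified tangent-plus-normal bundle). The Frenet equations stated before the proposition, together with the displayed formula for $\hat\Phi_z$ and the tautologies $\partial_z\Phi = \Phi_z$, $\partial_{\bar z}\Phi = \Phi_{\bar z}$ and their conjugates, express $\partial_z\mathcal F = \mathcal F\,U$ and $\partial_{\bar z}\mathcal F = \mathcal F\,V$ for two explicit $6\times 6$ matrices $U,V$ whose entries are built from the fundamental data $(u,A,C_j,\gamma_j,f_j)$. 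The first step is simply to assemble $U$ and $V$ from these equations.

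Second, I would verify that the zero-curvature condition $\partial_{\bar z}U - \partial_z V = [U,V]$ of this linear connection is equivalent to the compatibility equations~\eqref{eq:compatibility}. This is essentially Proposition~\ref{prop:fundamental-equations} read backwards: expanding the matrix identity entry by entry reproduces precisely the equations for $(C_j)_z$, $(\gamma_j)_{\bar z}$ and $(f_j)_{\bar z}$, while the algebraic relation $|\gamma_j|^2 = e^{2u}(1-C_j^2)/2$ enters as the constraint that keeps the Gram matrix of $\mathcal F$ consistent. Since $\Sigma$ is simply connected, the Frobenius theorem then produces a frame field $\mathcal F:\Sigma\to GL(6,\C)$, unique once its value $\mathcal F(p_0)$ at a base point $p_0$ is prescribed.

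Third, I would fix $\mathcal F(p_0)$ to be the frame of a genuine point of $\s^2\times\s^2$: the six vectors satisfying the correct Gram relations, namely $\vprodesc{\Phi}{\Phi} = 2$, $\vprodesc{\Phi}{\hat\Phi} = 0$, $\vprodesc{\Phi_z}{\Phi_z} = 0$, $|\Phi_z|^2 = e^{2u}/2$, $\vprodesc{\xi}{\bar\xi} = 1$, together with the orthogonality of $\{\Phi,\hat\Phi\}$ to the remaining vectors and, crucially, the product-structure relation encoded by the fixed involution $\tau = P_1 - P_2$ of $\R^6$, for which $\tau\Phi = \hat\Phi$ and $\tau\Phi_z = \hat\Phi_z$. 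One then checks that both $\vprodesc{\,}{\,}$ and $\tau$ are preserved along solutions: the Gram relations propagate because the fourth equation of~\eqref{eq:compatibility} forces $U,V$ into the Lie algebra preserving the metric, and since $\tau$ is constant and acts from the left while $U,V$ act from the right, $\tau\mathcal F$ solves the same system and agrees with the relabelled $\mathcal F$ at $p_0$. Integrating $d\Phi = \Phi_z\,dz + \Phi_{\bar z}\,d\bar z$ and splitting $\Phi = (\Phi_1,\Phi_2)$ via $\tau$ then yields a map with $|\Phi_1| = |\Phi_2| = 1$, i.e. $\Phi:\Sigma\to\s^2\times\s^2$; the relation $|\Phi_z|^2 = e^{2u}/2\neq 0$ makes it a conformal immersion with the prescribed metric, and the absence of a $\Phi_z,\Phi_{\bar z}$ component in the equation for $\Phi_{z\bar z}$ is exactly minimality.

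Finally, I would confirm that the given data are indeed the fundamental data of $(\Phi,\xi)$: defining $J_1,J_2$ from the reconstructed $\Phi_1,\Phi_2$ and reading off~\eqref{eq:J1Phi-xi}--\eqref{eq:J2Phi-xi} recovers the prescribed $C_j,\gamma_j$, while $f_j$ is recovered as the corresponding $\xi$- or $\bar\xi$-component of $\Phi_{zz}$; the hypothesis that neither $C_j^2$ is constantly $1$ guarantees $\gamma_j\not\equiv 0$, so $\xi$ is genuinely pinned down and $\Phi$ is non-complex. Uniqueness up to congruence follows because two admissible choices of $\mathcal F(p_0)$ differ by an element of the group preserving both $\vprodesc{\,}{\,}$ and $\tau$, which is precisely the isometry group of $\s^2\times\s^2$. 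The main obstacle is the third step: showing that the connection preserves the product involution $\tau$, so that the integrated $\Phi$ lands in a product of two spheres rather than merely in $\s^5$. This is where the two index-$j$ families must interlock, through the mixed terms $C_1C_2$ and $\gamma_1\gamma_2$ appearing in the equations for $\hat\Phi_z$ and $\Phi_{z\bar z}$; by contrast the flatness computation of the second step is long but entirely mechanical.
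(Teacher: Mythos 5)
Your overall architecture --- assemble the Frenet system into a linear connection on the trivial $\R^6\otimes\C$ bundle, check flatness, integrate by Frobenius on the simply connected $\Sigma$, and propagate the Gram matrix and the product involution from the initial condition --- is the same as the paper's (the paper compresses the realization and uniqueness steps into the single sentence that the mixed-derivative identities ``are the integrability conditions of the Frenet system'', so your third and fourth steps usefully expand what is left implicit there). The gap is in your second step. The zero-curvature condition is \emph{not} equivalent to~\eqref{eq:compatibility}: the curvature of the connection you build contains the terms $u_{z\bar{z}}$, $\bar{A}_z + A_{\bar{z}}$ and $(\gamma_j)_z$, and \eqref{eq:compatibility} prescribes none of these --- it only gives $(C_j)_z$, $(f_j)_{\bar{z}}$, $(\gamma_j)_{\bar{z}}$ and the algebraic constraint $2|\gamma_j|^2 = e^{2u}(1-C_j^2)$. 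A purely mechanical expansion of $\partial_{\bar{z}}U - \partial_z V = [U,V]$ therefore stalls on the $\Phi_z$-component of $\Phi_{zz\bar{z}} = \Phi_{z\bar{z}z}$ (the Gauss equation) and the $\xi$-component of $\xi_{z\bar{z}} = \xi_{\bar{z}z}$ (the Ricci equation), neither of which is among your listed identities.

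The missing step is exactly the content of the paper's proof: differentiate the constraint $2|\gamma_j|^2 = e^{2u}(1-C_j^2)$ in $z$ to get $\bar{\gamma}_j\bigl[(\gamma_j)_z - (2u_z + (-1)^{j}A)\gamma_j + 2iC_jf_j\bigr] = 0$; since $(\gamma_j)_{\bar{z}} = (-1)^{j+1}\bar{A}\gamma_j$ and $C_j^2\not\equiv 1$ force $\gamma_j$ to have isolated zeroes, one may cancel $\bar{\gamma}_j$ on a dense set and obtain a formula for $(\gamma_j)_z$ valid everywhere; then $(\gamma_j)_{z\bar{z}} = (\gamma_j)_{\bar{z}z}$ yields~\eqref{eq:Gauss-Ricci}, and cancelling $\gamma_j$ again delivers the Gauss and Ricci equations. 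Note that this is where the hypothesis that $C_j^2$ is not constantly $1$ does its real work --- in your write-up it is invoked only at the very end to conclude non-complexity, which is a sign that the flatness check was not actually closed. With this supplement your argument goes through; the rest (preservation of $\tau$, which you correctly flag as hinging on the mixed terms $C_1C_2$ and $\gamma_1\gamma_2$, and uniqueness via the stabilizer of the Gram form and $\tau$) is sound.
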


\begin{proof}
First, the equation for $(\gamma_j)_{\bar{z}}$ in~\eqref{eq:compatibility} ensures that $\gamma_j$ can be written as the product of a positive function and a holomorphic one and so $\gamma_j$ has isolated zeroes since we have suppose that $C_j^2 \not\equiv 1$, $j = 1, 2$. Therefore, the set $\hat{\Sigma} = \{p\in \Sigma:\, \gamma_1 \neq 0, \gamma_2 \neq 0\}$ is dense in $\Sigma$.

Now, deriving with respect to $z$ in the last equation of~\eqref{eq:compatibility} we get
\[
\bar{\gamma}_j\bigl[(\gamma_j)_z - (2u_z + (-1)^{j}A) \gamma_j + 2iC_jf_j\bigr] = 0, \quad j = 1, 2.
\]
As we have noticed, we can simplify $\bar{\gamma}_j$ and obtain an equation for $(\gamma_j)_z$ that holds in $\Sigma$. Using this new equation joint with~\eqref{eq:compatibility} in the equality $(\gamma_j)_{\bar{z}z} = (\gamma_j)_{z\bar{z}}$ we get that
\begin{equation}\label{eq:Gauss-Ricci}
\gamma_j\left(2u_{z\bar{z}} - 4e^{-2u}|f_j|^2 + \frac{e^{2u}}{2}C_j^2 + (-1)^{j}(\bar{A}_z + A_{\bar{z}})\right) = 0, \quad j=1,2.
\end{equation}
Finally, from the equations~\eqref{eq:Gauss-Ricci}, once we simplified the term $\gamma_j$, and~\eqref{eq:compatibility} we can easily check that
$\Phi_{zz\bar{z}} = \Phi_{z\bar{z}z}$ and $\xi_{z\bar{z}} = \xi_{\bar{z}z}$, which are the integrability conditions of the Frenet system.
\end{proof}

\begin{definition}\label{def:Hopf-differential}
Let $\Phi = (\Phi_1, \Phi_2): \Sigma \rightarrow \s^2 \times \s^2$ be a minimal immersion of an oriented surface $\Sigma$. We define the Hopf $2$-differential $\Theta$ associated to $\Phi$ by
\begin{equation}\label{eq:modulo-Hopf-differential}
\Theta(z) = \frac{1}{2}\prodesc{J_1\Phi_z}{J_2\Phi_z} \df z \otimes \df z
\end{equation}
where $z$ is a conformal parameter in $\Sigma$.
\end{definition}

Observe that $\Theta = \Theta_{1} = -\Theta_{2}$, where $\Theta_{j}=\prodesc{(\Phi_j)_z}{(\Phi_j)_z} \df z \otimes \df z$ is the Hopf $2$-differential associated to the harmonic map $\Phi_j$, $j = 1, 2$, and so it is holomorphic. Locally,  taking into
account the fundamental data of the immersion, we can write $\prodesc{J_1\Phi_z}{J_2\Phi_z} = \gamma_1 \gamma_2$ and so
\begin{equation}\label{eq:modulo-diferencial-Hopf}
|\langle J_1\Phi_z,J_2\Phi_z\rangle|^2 = |\gamma_1|^2|\gamma_2|^2=\frac{e^{4u}}{4} (1-C_1^2)(1-C_2^2).
\end{equation}

Since $(\gamma_j)_{\bar{z}} = (-1)^{j+1}\bar{A}\gamma_j$, either $\gamma_j$ is identically zero or its zeroes are isolated (see the proof of Proposition~\ref{prop:compatibility-equations}). Hence, either $C_j^2 = 1$ in $\Sigma$ or the points where $C_j^2 = 1$ are isolated, $j =1, 2$.

In the following lemma we are going to get some equations that will be usefull in the sequel.

\begin{lemma}\label{lm:properties-fundamental-data}
Let $\Phi: \Sigma \rightarrow \s^2 \times \s^2$ be a minimal immersion of an orientable surface $\Sigma$ with  fundamental data $(u, A, C_j, \gamma_j, f_j:\, j = 1, 2)$ for a given orthonormal reference. Then:
\begin{enumerate}[(i)]
	\item $\abs{f_j}^2 = \frac{e^{4u}}{8}(C_j^2 - K + (-1)^{j}K^\perp)$. \label{eq:modulo-f}
	In particular, if $\Phi$ is a complex immersion with respect to $J_j$ then $K + (-1)^{j+1}K^\perp = 1$.
	\item The Laplacian and the gradient of $C_j$ are given by \label{eq:laplacian-gradient-C}
	\[
	\begin{split}
		\Delta C_j &= 2C_j(K + (-1)^{j+1}K^\perp) - C_j(1 + C_j^2),  \\
		\abs{\nabla C_j}^2 &= (1-C_j^2)(C_j^2 - K + (-1)^{j}K^\perp).
	\end{split}
	\]
	\item \label{eq:laplacian-logarithm}
	$\Delta \log(1\pm C_j) = \mp C_j + K + (-1)^{j+1}K^\perp$, whenever the function $1 \pm C_j \not\equiv 0$.

	\item \label{item:formulas-caracteristica-fibrado-normal} If $\Sigma$ is compact and $\Phi$ is non-complex then:
	\[
	\begin{aligned}
	-N_1^+=2(d_1+d_2)+\chi+\chi^{\perp},\quad-N_1^-=-2(d_1+d_2)+\chi+\chi^{\perp}\\
	-N_2^+=2(d_1-d_2)+\chi-\chi^{\perp},\quad-N_2^-=-2(d_1-d_2)+\chi-\chi^{\perp}
	\end{aligned}
	\]
	where $N_i^{\pm}$ are the sum of  the orders for all the zeroes of the functions $1\mp C_i$ and $\chi$, $\chi^{\perp}$ are the Euler numbers of $\Sigma$ and the normal bundle of $\Phi$. When $\Phi$ is a complex immersion with respect to $J_i$ but not with respect to $J_j$, $i \neq j$, only the equations for $N_j^\pm$ hold.
\end{enumerate}
\end{lemma}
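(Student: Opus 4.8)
The plan is to push everything into the conformal coordinate $z$ and read the geometry off the Frenet equations. In the induced metric $e^{2u}\abs{\df z}^2$ one has $\Delta=4e^{-2u}\partial_z\partial_{\bar z}$ and $K=-4e^{-2u}u_{z\bar z}$, and minimality together with the Frenet system identifies the second fundamental form as $\sigma(\partial_z,\partial_z)=f_1\xi+f_2\bar\xi$, $\sigma(\partial_z,\partial_{\bar z})=0$ (the $\hat\Phi$--terms are normal to $\s^2\times\s^2$ inside $\R^6$ and play no role). Since $\prodesc{\xi}{\bar\xi}=1$ and $\prodesc{\xi}{\xi}=0$, this gives at once $\abs{\sigma}^2=8e^{-4u}(\abs{f_1}^2+\abs{f_2}^2)$, which is the bridge to the curvature quantities.

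For \eqref{eq:modulo-f} I would combine the two second--order compatibility equations. The Gauss equation with $H=0$ reads $\abs{\sigma}^2=C_1^2+C_2^2-2K$, so the formula above yields the sum $\abs{f_1}^2+\abs{f_2}^2=\frac{e^{4u}}{8}(C_1^2+C_2^2-2K)$. To split the two terms I would expand $N=\frac{1}{\sqrt2}(\xi+\bar\xi)$, $\tilde N=\frac{i}{\sqrt2}(\xi-\bar\xi)$ and write the Weingarten maps $A_N,A_{\tilde N}$ in the orthonormal tangent basis $e_1=e^{-u}\Phi_x$, $e_2=e^{-u}\Phi_y$ in terms of the real and imaginary parts of $f_1,f_2$; a direct computation of the commutator then gives $\prodesc{[A_{e_4},A_{e_3}]e_1}{e_2}=-4e^{-4u}(\abs{f_1}^2-\abs{f_2}^2)$. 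Inserting this into the Ricci equation produces the difference $\abs{f_1}^2-\abs{f_2}^2=\frac{e^{4u}}{8}(C_1^2-C_2^2)-\frac{e^{4u}}{4}K^\perp$, and solving this $2\times2$ linear system gives the stated $\abs{f_j}^2$. The ``in particular'' is immediate: by Proposition~\ref{prop:fundamental-equations} complexity with respect to $J_j$ forces $f_j=0$ and $C_j^2=1$, whence $1-K+(-1)^jK^\perp=0$.

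For \eqref{eq:laplacian-gradient-C} the gradient is cheapest: from $(C_j)_z=2ie^{-2u}f_j\bar\gamma_j$ and $\abs{(C_j)_z}^2=\frac{e^{2u}}{4}\abs{\nabla C_j}^2$, inserting $\abs{\gamma_j}^2=\frac{e^{2u}}{2}(1-C_j^2)$ gives $\abs{\nabla C_j}^2=8e^{-4u}\abs{f_j}^2(1-C_j^2)$, and \eqref{eq:modulo-f} finishes it. For the Laplacian I would differentiate $(C_j)_z$ with respect to $\bar z$, using the full system~\eqref{eq:compatibility} together with $(\gamma_j)_z=(2u_z+(-1)^jA)\gamma_j-2iC_jf_j$ from the proof of Proposition~\ref{prop:compatibility-equations}; the terms carrying $u_{\bar z}$ cancel in pairs and, crucially, so do the two terms carrying $\bar A$ (their signs are $(-1)^{j+1}$ and $(-1)^{j}$), leaving $(C_j)_{z\bar z}=-\frac{e^{2u}}{4}C_j(1-C_j^2)-4C_je^{-2u}\abs{f_j}^2$. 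Multiplying by $4e^{-2u}$ and substituting \eqref{eq:modulo-f} yields the stated $\Delta C_j$. Part \eqref{eq:laplacian-logarithm} then follows formally from \eqref{eq:laplacian-gradient-C} by the chain rule $\Delta\log(1\pm C_j)=\pm\frac{\Delta C_j}{1\pm C_j}-\frac{\abs{\nabla C_j}^2}{(1\pm C_j)^2}$, where $1-C_j^2=(1-C_j)(1+C_j)$ cancels one power of $1\pm C_j$ and the numerator factors as $(1\pm C_j)\bigl(K+(-1)^{j+1}K^\perp\mp C_j\bigr)$.

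Finally, for \eqref{item:formulas-caracteristica-fibrado-normal} I would integrate the identities of \eqref{eq:laplacian-logarithm} over the compact $\Sigma$. The right-hand sides integrate by Gauss--Bonnet, $\int_\Sigma K\,dA=2\pi\chi$ and $\int_\Sigma K^\perp\,dA=2\pi\chi^\perp$ (the normal Euler number), together with~\eqref{eq:integral-formula-C} for $\int_\Sigma C_j\,dA$. The left-hand sides are governed by the singularities of $\log(1\mp C_j)$: the zeroes of $1\mp C_j$ are exactly the complex points $C_j=\pm1$, which by $(\gamma_j)_{\bar z}=(-1)^{j+1}\bar A\gamma_j$ coincide with the isolated zeroes of $\gamma_j$. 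Writing $\gamma_j$ locally as a positive function times a holomorphic one with a zero of order $n_p$, the relation $\abs{\gamma_j}^2=\frac{e^{2u}}{2}(1-C_j^2)$ shows $1\mp C_j$ vanishes to order $2n_p$, so $\log(1\mp C_j)=2n_p\log\abs{z}+(\text{smooth})$; excising small disks and applying the divergence theorem (using $\Delta\log(1\mp C_j)\,dA=\Delta_0\log(1\mp C_j)\,\df x\,\df y$ with $\Delta_0=4\partial_z\partial_{\bar z}$) gives $\int_\Sigma\Delta\log(1\mp C_j)\,dA=-4\pi\sum_p n_p=-2\pi N_j^\pm$, since $N_j^\pm=\sum_p 2n_p$ is the total vanishing order of $1\mp C_j$. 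Equating both sides and dividing by $2\pi$ produces the four formulas, and the closing remark is clear: if $\Phi$ is complex with respect to $J_i$ then $1\mp C_i\equiv0$ and $N_i^\pm$ is meaningless, whereas $C_j^2\not\equiv1$ keeps the $N_j^\pm$ argument valid. I expect the main obstacle to be this last bookkeeping, namely matching the vanishing order $2n_p$ of $1\mp C_j$ to the holomorphic order $n_p$ of $\gamma_j$: that factor of two is exactly what turns $-4\pi\sum_p n_p$ into $-2\pi N_j^\pm$ and delivers the clean integer coefficients; the sign in the commutator step of \eqref{eq:modulo-f} is the other place demanding care.
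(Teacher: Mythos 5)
Your proof is correct, and for parts (ii)--(iv) it follows essentially the paper's route: the gradient identity read off from $(C_j)_z=2ie^{-2u}f_j\bar\gamma_j$, the Laplacian by differentiating once more through the compatibility system \eqref{eq:compatibility} (your cancellation of the $u_{\bar z}$ and $\bar A$ terms is right), (iii) by the chain rule, and (iv) by integrating (iii) against Gauss--Bonnet, the degree formulas \eqref{eq:integral-formula-C} and the count of logarithmic singularities. The one genuine divergence is in (i): the paper extracts $\abs{f_j}^2$ from the integrability identity \eqref{eq:Gauss-Ricci} together with $K=-4e^{-2u}u_{z\bar z}$ and $K^\perp=2e^{-2u}(\bar A_z+A_{\bar z})$, which forces it to treat the complex case (where $\gamma_j\equiv 0$ and \eqref{eq:Gauss-Ricci} is vacuous) separately, whereas you obtain the sum $\abs{f_1}^2+\abs{f_2}^2$ from the Gauss equation via $\abs{\sigma}^2=8e^{-4u}(\abs{f_1}^2+\abs{f_2}^2)$ and the difference from the Ricci equation via the Weingarten commutator, then solve the $2\times 2$ system; your value $\prodesc{[A_{e_4},A_{e_3}]e_1}{e_2}=-4e^{-4u}(\abs{f_1}^2-\abs{f_2}^2)$ is correct for the orientation convention $e_3=\tilde N$, $e_4=N$, and this route has the small advantage of holding uniformly whether or not $\gamma_j$ vanishes. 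In (iv) you also re-derive, by excising disks around the zeros of $\gamma_j$, the integral formula that the paper simply cites from Schoen--Yau; your matching of the vanishing order $2n_p$ of $1\mp C_j$ with the holomorphic order $n_p$ of $\gamma_j$ is exactly the bookkeeping that makes the coefficients come out as stated.
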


\begin{proof}
If $\Phi$ is a complex immersion with respect to $J_j$ then we already know that $C_j^2 = 1$ and $f_j = 0$ (cf.\ Proposition~\ref{prop:fundamental-equations}). From Gauss and Ricci equations it is easy to deduce that $K + (-1)^{j+1}K^\perp = 1$ and (i) follows in this case. If $\Phi$ is a non-complex immersion then, using that $K^\perp = 2e^{-2u}(\bar{A}_z + A_{\bar{z}})$, \eqref{eq:Gauss-Ricci} and $K = -4e^{-2u}u_{z\bar{z}}$, we obtain (i) in this case.

Taking into account~\eqref{eq:compatibility}, \eqref{eq:modulo-f} and the expressions $\Delta f = 4e^{-2u}f_{z\bar{z}}$ and $|\nabla f|^2 = 4e^{-2u}|f_z|^2$, we deduce (ii). Now, (iii) follows easily from (ii). Finally, (iv) is proved integrating~\eqref{eq:laplacian-logarithm} and using~\cite[formulae (6) and (7) in \S 2]{SY}.
\end{proof}

\section{Local results}\label{sec:local-results}
We start this section characterizing locally the easiest examples of minimal surfaces of $\s^2\times\s^2$: the slices, the diagonal and the Clifford torus (see Section~\ref{subsec:complex-surfaces} and~\ref{subsec:lagrangian-surfaces} for their definition).
Although their classification as the only totally geodesic surfaces is well-known (see \cite{CN}) we include it for completeness.

\begin{proposition}\label{prop:totallygeodesic}
Let $\Phi = (\Phi_1, \Phi_2):\Sigma\rightarrow\s^2\times\s^2$ be an immersion of an orientable surface $\Sigma$. Then
the following statements are equivalent:
\begin{enumerate}
\item $\Phi$ is totally geodesic,
\item $\Phi$ is minimal and the Gauss and normal curvatures $K$ and $K^{\perp}$ are constant,
\item $\Phi$ is minimal and the functions $C_1$ and $C_2$ are constant,
\item $\Phi(\Sigma)$ is congruent to an open subset of either an slice, or
\textbf{D} or \textbf{T}.
\end{enumerate}
\end{proposition}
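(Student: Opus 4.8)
The plan is to prove the cycle $(4)\Rightarrow(1)\Rightarrow(3)\Rightarrow(4)$, which makes $(1)$, $(3)$, $(4)$ equivalent, and to close the scheme by establishing $(3)\Leftrightarrow(2)$. The implications $(4)\Rightarrow(1)$, $(1)\Rightarrow(3)$ and $(3)\Rightarrow(2)$ are short consequences of the material already developed; the genuinely delicate point is $(2)\Rightarrow(3)$, where the absence of a compactness hypothesis rules out any integral or maximum-principle shortcut.

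For $(4)\Rightarrow(1)$ there is nothing to do, since the slices, \textbf{D} and \textbf{T} were observed in Section~\ref{sec:preliminares} to be totally geodesic (see also \cite{CN}). For $(1)\Rightarrow(3)$, I would use that a totally geodesic immersion is minimal with $\sigma=0$; as the second fundamental form is exactly the $\{\xi,\bar\xi\}$-component of $\Phi_{zz}$, this amounts to $f_1=f_2=0$. Feeding $f_j=0$ into Lemma~\ref{lm:properties-fundamental-data}(\ref{eq:modulo-f}) gives $C_j^2-K+(-1)^jK^\perp=0$, and then the gradient formula in Lemma~\ref{lm:properties-fundamental-data}(\ref{eq:laplacian-gradient-C}) forces $|\nabla C_j|^2=0$, so $C_1,C_2$ are constant. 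For $(3)\Rightarrow(2)$, constancy of $C_j$ makes the left-hand side of that same gradient formula vanish; since $C_j$ is constant it is either $\equiv\pm1$ (and then Lemma~\ref{lm:properties-fundamental-data}(\ref{eq:modulo-f}) in the complex case gives $K+(-1)^{j+1}K^\perp=1=C_j^2$) or $C_j^2<1$ everywhere (and then the surviving factor must vanish). In both cases $K-(-1)^jK^\perp=C_j^2$, so that $K=\tfrac12(C_1^2+C_2^2)$ and $K^\perp=\tfrac12(C_1^2-C_2^2)$ are constant.

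For $(3)\Rightarrow(4)$ I would first upgrade $(3)$ to total geodesy: comparing the identity $K=\tfrac12(C_1^2+C_2^2)$ just obtained with the Gauss equation $K=\tfrac12(C_1^2+C_2^2)-\tfrac12|\sigma|^2$ yields $|\sigma|^2=0$. Thus $f_1=f_2=0$, and the equation for $(f_j)_{\bar z}$ in~\eqref{eq:compatibility} collapses to $C_j\gamma_j=0$; combined with $|\gamma_j|^2=e^{2u}(1-C_j^2)/2$ this shows that for each $j$ either $C_j=0$ (with $\gamma_j\neq0$) or $C_j^2=1$ (with $\gamma_j=0$). Hence $(C_1,C_2)\in\{-1,0,1\}^2$, and the admissible pairs split into three geometric types: both entries $\pm1$ (complex for $J_1$ and $J_2$, $K=1$, $K^\perp=0$), one entry $\pm1$ and the other $0$ (complex for one structure and Lagrangian for the other, $K=\tfrac12$, $K^\perp=\pm\tfrac12$), and both $0$ (Lagrangian for both, $K=K^\perp=0$). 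Each type is realized respectively by a slice, by \textbf{D} and by \textbf{T}. Since a totally geodesic surface is determined by one of its points and its tangent plane there, and the isometry group of $\s^2\times\s^2$ acts transitively on the (point, $2$-plane) configurations producing a prescribed pair $(C_1,C_2)$ of this list, applying such an isometry identifies $\Phi(\Sigma)$ with an open piece of the corresponding model, which is $(4)$.

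The main work is $(2)\Rightarrow(3)$. Here $\Sigma$ carries the constant Gauss curvature $K$, and each $\phi:=C_j$ satisfies, by Lemma~\ref{lm:properties-fundamental-data}, the overdetermined pair $|\nabla\phi|^2=P(\phi)$ and $\Delta\phi=Q(\phi)$, where $P(t)=(1-t^2)(t^2-a_j)$, $Q(t)=t(2a_j-1-t^2)$ and $a_j:=K+(-1)^{j+1}K^\perp$ is constant. Differentiating $|\nabla\phi|^2=P(\phi)$ shows $\nabla\phi$ is an eigenvector of the Hessian with eigenvalue $P'(\phi)/2$, so both Hessian eigenvalues become explicit functions of $\phi$; substituting this, together with $\Delta\phi=Q(\phi)$ and $\mathrm{Ric}(\nabla\phi,\nabla\phi)=K|\nabla\phi|^2$, into the Bochner identity $\tfrac12\Delta|\nabla\phi|^2=|\mathrm{Hess}\,\phi|^2+\langle\nabla\Delta\phi,\nabla\phi\rangle+K|\nabla\phi|^2$ turns it into a single polynomial identity in $\phi$. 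If $\phi$ were non-constant it would take a whole interval of values, forcing the coefficients to agree; this linear system in $(a_j,K)$ admits only the solutions $(1,1)$ and $(0,-4)$. The first is impossible because then $P(t)=-(1-t^2)^2\le0$ while $|\nabla\phi|^2\ge0$; the second is discarded by inserting $K=-4$ into the analysis of the other Kähler function, whose parameters cannot lie in $\{(1,1),(0,-4)\}$ and which is therefore constant, after which the complex-case relation of Lemma~\ref{lm:properties-fundamental-data}(\ref{eq:modulo-f}) contradicts the value forced on $K\mp K^\perp$. Hence $C_1$ and $C_2$ are both constant, giving $(2)\Rightarrow(3)$ and completing the equivalences. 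The coefficient bookkeeping in the Bochner step, and the verification that the two exceptional parameter pairs are geometrically impossible, is where I expect the real effort to lie.
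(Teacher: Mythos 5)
Your proposal is correct, and its skeleton matches the paper's: the only hard implication is $(2)\Rightarrow(3)$, and in both treatments it reduces to showing that the isoparametric system $|\nabla C_j|^2=(1-C_j^2)(C_j^2-a_j)$, $\Delta C_j=C_j(2a_j-1-C_j^2)$, with $a_j=K+(-1)^{j+1}K^\perp$ constant, admits no non-constant solution. The exceptional parameter pairs you isolate, $(a_j,K)=(1,1)$ and $(0,-4)$ (equivalently $(a_j,a_k)=(1,1)$ and $(0,-8)$), are exactly those the paper extracts from the relation $(-9a_j+a_k+8)C_j^2=a_j(3a_j+a_k-4)$ of \cite[Lemma 3.3]{EGT}, and you dispose of them by the same two observations: non-negativity of $|\nabla C_j|^2$ kills $(1,1)$, and feeding $a_k=-8$ into the analysis of the other K\"ahler function kills $(0,-4)$. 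The differences are ones of packaging. Where the paper invokes \cite[Lemma 3.3]{EGT} as a black box, you re-derive the coefficient constraints from the Bochner identity on the set where $\nabla C_j\neq 0$; this makes the step self-contained at the cost of the bookkeeping you anticipate (I checked: the $t^4$, $t^2$ and constant coefficients give $K=5a_j-4$, $K(1+a_j)=-a_j^2+7a_j-4$ and $Ka_j=2a_j-a_j^2$, whose common solutions are precisely your two pairs -- though, pedantically, the system is not linear in $(a_j,K)$). In $(3)\Rightarrow(4)$ you first upgrade to total geodesy via the Gauss equation and then appeal to the rigidity of totally geodesic surfaces together with transitivity of the isometry group on tangent $2$-planes realizing a prescribed admissible pair $(C_1,C_2)$; the paper instead quotes \cite[Propositions 2.2 and 2.3]{CU} for $\mathbf{T}$ and $\mathbf{D}$ and argues directly that one component of $\Phi$ is constant in the bi-complex case. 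Your transitivity claim is true but is the one assertion left unverified: it requires the (easy) checks that a $J_1$-complex, $J_2$-Lagrangian plane is the graph of a holomorphic isometry between the factors and that a bi-Lagrangian plane is a product of lines. The rearrangement of the cycle of implications relative to the paper's $(4)\Rightarrow(1)\Rightarrow(2)\Rightarrow(3)\Rightarrow(4)$ is cosmetic.
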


\begin{proof}
As we remarked in Sections~\ref{subsec:complex-surfaces} and~\ref{subsec:lagrangian-surfaces}, the slices and the surfaces \textbf{D} and \textbf{T}  are totally geodesic and the corresponding functions $C_1$, $C_2$, $K$ and $K^{\perp}$ are constant.

If $\Phi$ is a totally geodesic immersion, i.e.\ $f_1 = f_2 = 0$, then $\Phi$ is a minimal immersion and from Lemma~\ref{lm:properties-fundamental-data}.\eqref{eq:modulo-f}-\eqref{eq:laplacian-gradient-C}
 we deduce that $K$ and $K^{\perp}$ are constant.

If $\Phi$ is a minimal immersion and $K$ and $K^{\perp}$ are constant functions, we define  $a_j=K+(-1)^{j+1}K^{\perp},\, j=1,2$. Then, from Lemma~\ref{lm:properties-fundamental-data}.\eqref{eq:laplacian-gradient-C} we obtain that
\[
|\nabla C_j|^2=(1-C_j^2)(C_j^2-a_j),\quad \Delta C_j=C_j(2a_j-1-C_j^2),\quad j=1,2.
\]
These equations mean that $C_j$ are isoparametric functions on $\Sigma$. Let $U_j = \{p\in \Sigma\,|\, \nabla (C_j)_p \neq 0\}$, $j = 1, 2$.
If $U_j=\emptyset$, then $C_j$ is constant and the above equations imply that either $a_j=0$ or $a_j=1$, for any $j\in\{1,2\}$. If $U_j\not =\emptyset$, then using~\cite[Lemma 3.3]{EGT}, we obtain that
\[
(-9a_j+a_k+8)C_j^2 = a_j(3a_j+a_k-4),\quad \text{in } U_j,
\]
for any $j\in\{1,2\}$, where $k\in\{1,2\},\,k\not= j$.
Since $C_j^2$ is non-constant in $U_j$, from the last equation we get $9a_j=a_k+8$ and hence either $a_j = 0$ or $3a_j+a_k=4$. As $a_j\not= 1$, we have that $a_j=0, a_k=-8$.

Hence the only positility is that $U_1=U_2=\emptyset$. Therefore, both $C_1$ and $C_2$ are constant functions.

 Finally, if $\Phi$ is a minimal immersion and $C_1$ and $C_2$ are constant functions,  then from Lemma~\ref{lm:properties-fundamental-data}.\eqref{eq:laplacian-gradient-C} it is easy to check that the only value for the constant functions $C_1$ and $C_2$ are $1$, $-1$ or $0$. If $C_1=C_2=0$, then $\Phi(\Sigma)$ is, up to a congruence, an open subset of \textbf{T} (see \cite[Proposition 2.2]{CU}). If $C_i^2=1$, $C_j=0$, $i\not=j$, then $\Phi(\Sigma)$ is, up to a congruence, an open subset of \textbf{D} (see \cite[Proposition 2.3]{CU}). Finally if $C_1^2=C_2^2 = 1$, then either $\hbox{rank}\,(\Phi_1)=0$ at any point or $\hbox{rank}\,(\Phi_2)=0$ at any point. So, either $\Phi_1$ or $\Phi_2$ is a constant function and so $\Phi(\Sigma)$ is an open piece of a slice.
\end{proof}

A totally geodesic hypersurface of $\s^2 \times \s^2$ is, up to an ambient isometry, an open set of $\s^2 \times \s^1$ (cf.~\cite[Proposition 1]{TU1}). Hence, any minimal surface of $\s^2 \times \s^1$ or of its universal cover $\s^2 \times \R$ is also a minimal surface of $\s^2 \times \s^2$. In the following result we characterize those minimal surfaces of $\s^2 \times \s^2$ that lay in a totally geodesic hypersurface.

\begin{proposition}\label{prop:curvaturanormalnula}
Let $\Phi:\Sigma\rightarrow\s^2\times\s^2$ be a minimal immersion of a surface $\Sigma$. Then, $\Phi(\Sigma)$ is non-full, i.e.\ it is contained in a totally geodesic hypersurface of $\s^2 \times \s^2$, if and only if $C_1^2 = C_2^2$. Moreover, these surfaces have $K^\perp = 0$.
\end{proposition}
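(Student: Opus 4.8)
The statement splits into the equivalence and the side claim $K^{\perp}=0$, and I would establish $K^{\perp}=0$ first because it feeds the equivalence. Starting from $C_1^2=C_2^2$, differentiation gives $C_1\nabla C_1=C_2\nabla C_2$, hence $C_1^2\abs{\nabla C_1}^2=C_2^2\abs{\nabla C_2}^2$, so $\abs{\nabla C_1}=\abs{\nabla C_2}$ on the set $\{C_1\neq 0\}$. That set is open and dense unless $C_1\equiv 0$: by Lemma~\ref{lm:properties-fundamental-data}.\eqref{eq:laplacian-gradient-C} the function $C_1$ solves the linear elliptic equation $\Delta C_1=C_1(2K+2K^{\perp}-1-C_1^2)$, so $\{C_1=0\}$ has empty interior by unique continuation; and $C_1\equiv 0$ forces $C_2\equiv 0$, the Clifford torus $\textbf{T}$, which is already known to be totally geodesic, non-full and to satisfy $K^{\perp}=0$. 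Substituting $\abs{\nabla C_1}=\abs{\nabla C_2}$ and $C_1^2=C_2^2$ into the two gradient formulas of Lemma~\ref{lm:properties-fundamental-data}.\eqref{eq:laplacian-gradient-C} yields $(1-C_1^2)K^{\perp}=0$; since $C_1^2=1$ only at isolated points (or $\Phi$ is a slice, for which $K^{\perp}=0$ anyway), continuity gives $K^{\perp}\equiv 0$.

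With $K^{\perp}\equiv 0$ in hand, I would note that $g_{\pm}:=C_1\pm C_2$ now satisfy the \emph{same} linear elliptic equation $\Delta g_{\pm}=(2K-1-C_1^2)g_{\pm}$, again from Lemma~\ref{lm:properties-fundamental-data}.\eqref{eq:laplacian-gradient-C} together with $C_1^2=C_2^2$. The hypothesis $C_1^2=C_2^2$ reads $g_+g_-\equiv 0$, equivalently $\Jac(\Phi_1)\,\Jac(\Phi_2)\equiv 0$. If both $g_+$ and $g_-$ were nontrivial, each of their zero sets would be closed with empty interior by unique continuation, and two such sets cannot cover the surface; hence $g_+\equiv 0$ or $g_-\equiv 0$, that is $C_1\equiv C_2$ or $C_1\equiv -C_2$ on the connected surface $\Sigma$.

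Assume $C_1\equiv C_2$ (the case $C_1\equiv -C_2$ is symmetric, with $\Phi_1$ replacing $\Phi_2$), so $\Jac(\Phi_2)\equiv 0$ and the harmonic map $\Phi_2:\Sigma\to\s^2$ has rank at most one everywhere. If $\Phi_2$ is constant, $\Phi(\Sigma)$ lies in a slice and we are done; otherwise its branch points are isolated, so on the open dense set where $d\Phi_2$ has rank one the image is locally a regularly parametrized curve $c$ in $\s^2$. Writing $\Phi_2=c\circ s$ locally and imposing the harmonic map equation $\Delta\Phi_2=-\abs{\nabla\Phi_2}^2\Phi_2$, the part of $\Delta\Phi_2$ tangent to $\s^2$ must vanish; its two components force $\Delta s=0$ and $\kappa_g\abs{\nabla s}^2=0$, so the geodesic curvature $\kappa_g$ of $c$ is zero and $c$ is an arc of a great circle. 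Thus $\prodesc{\Phi_2}{a}\equiv 0$ on a nonempty open set for some fixed $a\in\R^3$; since $h=\prodesc{\Phi_2}{a}$ solves $\Delta h+\abs{\nabla\Phi_2}^2h=0$, unique continuation propagates $h\equiv 0$ to all of $\Sigma$. Hence $\Phi(\Sigma)\subset\s^2\times\{y:\prodesc{y}{a}=0\}$, a totally geodesic hypersurface, and $\Phi$ is non-full.

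For the converse, if $\Phi(\Sigma)$ lies in a totally geodesic hypersurface, then up to an ambient isometry this hypersurface is $\s^2\times\s^1$ with $\s^1$ a great circle of the second factor, so $\Phi_2$ takes values in a great circle, $d\Phi_2$ has rank at most one, and $\Jac(\Phi_2)\equiv 0$; reading this through the Jacobian formulas gives $C_1=C_2$, hence $C_1^2=C_2^2$, the condition being invariant under the congruences used. The main obstacle is the third paragraph, passing from the pointwise rank condition to a single global great circle. The two delicate points are that $\Jac(\Phi_1)\,\Jac(\Phi_2)\equiv 0$ does \emph{not} by itself force one factor to degenerate (the nodal sets of harmonic Jacobians are generically curves, not isolated points), which is why I route the dichotomy through the common Schr\"odinger equation for $g_{\pm}$ and a Baire-category argument; and that the local great circles must be shown to assemble into one global circle, which I obtain from unique continuation for $\prodesc{\Phi_2}{a}$ rather than from any monodromy argument.
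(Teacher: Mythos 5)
Your proof is correct, and it takes a genuinely different route from the paper's at the decisive step. Both arguments agree in outline (first force $K^{\perp}=0$, then split $C_1^2=C_2^2$ into $C_1=C_2$ or $C_1=-C_2$, then exhibit the hypersurface), but the implementations differ. For $K^{\perp}=0$ the paper differentiates $C_1^2=C_2^2$ in the complex frame to get $C_1f_1\bar\gamma_1=C_2f_2\bar\gamma_2$ and concludes $C_1^2K^{\perp}=0$ via Lemma~\ref{lm:properties-fundamental-data}(i), whereas you compare the two gradient identities of Lemma~\ref{lm:properties-fundamental-data}(ii) to get $(1-C_1^2)K^{\perp}=0$; both then need the same density considerations, and your handling of the exceptional sets $\{C_1=0\}$ and $\{C_1^2=1\}$ by unique continuation is sound. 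For the dichotomy, your Baire-plus-unique-continuation argument for $g_{\pm}=C_1\pm C_2$ is essentially the paper's appeal to Cheng's nodal set theorem in different clothing. The real divergence is the final step: the paper stays inside its Frenet formalism, gauging the normal frame so that $A=0$, rotating so that $\gamma_1=\gamma_2$ and $f_1=f_2$, and showing $\tilde N=\tfrac{i}{\sqrt2}(\xi-\bar\xi)$ is a constant vector of the form $(0,a)$; you instead exploit that $C_1=C_2$ means $\Jac(\Phi_2)\equiv 0$ and analyze the rank-one harmonic map $\Phi_2$ directly, showing via the tension field decomposition $\Delta\Phi_2=c''\abs{\nabla s}^2+c'\Delta s$ that its local images are geodesic arcs, and then globalizing with unique continuation for $h=\prodesc{\Phi_2}{a}$. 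Your route is more elementary and makes the converse direction immediate ($\Phi_2$ valued in a great circle forces $\Jac(\Phi_2)=0$, hence $C_1=C_2$, versus the paper's computation of $\tilde N_z$); the paper's route has the advantage of producing the identities $A=0$, $\gamma_1=\gamma_2$, $f_1=f_2$ inside the structural framework that the rest of the paper reuses. The only points where you lean on standard facts without proof are that a non-constant harmonic map of surfaces with vanishing Jacobian has isolated singular points (which follows from the holomorphy of its Hopf differential, available here) and the constant rank theorem; neither is a gap.
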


\begin{proof}
We will assume that $\Sigma$ is oriented taking the oriented two-fold covering of $\Sigma$ if necessary.

Let suppose that $\Phi$ factorizes through $\s^2 \times \s^1$. Hence there exists $a \in \R^3$, $|a| = 1$, such that $\tilde{N} = (0, a)$ is a unit normal vector to $\s^2 \times \s^1$ in $\s^2 \times \s^2$. Let $N$ be a unit normal vector to $\Phi$ in $\s^2 \times \s^1$ such that $\{\tilde{N}, N\}$ is an oriented orthonormal reference in the normal bundle with the induced orientation. Following the previous section we consider the reference in the complexify normal bundle $\sqrt{2}\xi = N - i\tilde{N}$. Therefore:
\[
0 = i\sqrt{2} \tilde{N}_z =  (\bar{\xi} - \xi)_z = 2e^{-2u}(f_2 - f_1)\Phi_{\bar{z}} - A(\xi + \bar{\xi}) +\frac{i}{2}(C_2\gamma_1 - C_1\gamma_2)\hat{\Phi}
\]
where we have used the Frenet equations of $\Phi$. From the last equation we deduce that $A = 0$ and $f_1 = f_2$. Hence $K^\perp = 2e^{-2u}(\bar{A}_z + A_{\bar{z}}) = 0$ (cf.\ proof of Lemma~\ref{lm:properties-fundamental-data}) and, using Lemma~\ref{lm:properties-fundamental-data}.\eqref{eq:modulo-f}, we get $C_1^2 = C_2^2$.

Conversely, let suppose that $C_1^2 = C_2^2$. If $\Phi$ is a complex immersion, it is clear that $\Phi(\Sigma)$ is an slice of $\s^2 \times \s^2$ and so, factorize through a totally geodesic hypersurface. Hence, we are going to suppose that $\Phi$ is not a complex immersion.

Deriving in the equation $C_1^2 = C_2^2$ with respecto to $z$ and taking into account~\eqref{eq:compatibility} we get $C_1 f_1 \bar{\gamma}_1 = C_2 f_2 \bar{\gamma}_2$. Using now Lemma~\ref{lm:properties-fundamental-data}.\eqref{eq:modulo-f} we deduce $C_1^2K^\perp = 0$. Let $U = \{p \in \Sigma\, |\, C_1 = 0\}$. If the interior of $U$ is non empty, then $\mathrm{int}(U)$ is a open piece of $\mathbf{T}$ (cf.\ Proposition~\ref{prop:totallygeodesic} and so $K^\perp = 0$ on $\mathrm{int}(U)$. Hence $K^\perp = 0$ on $\Sigma$.

Now, using Lemma~\ref{lm:properties-fundamental-data}.\eqref{eq:laplacian-gradient-C} we deduce that
\[
(\Delta + F)(C_1 - C_2) = 0, \qquad F = -2K + (1+C_1^2) = -2K + (1+C_2^2)
\]
Using classical results from elliptic theory (cf.\ ~\cite{C}), we obtain that either $C_1 = C_2$ or $A = \{p \in \Sigma\,|\, C_1 (p) = C_2 (p)\}$ is a set of curves in $\Sigma$. In this second case, since $C_1^2 =
C_2^2$, we have that $C_1 + C_2 = 0$ on $\Sigma \setminus A$ and hence on $\Sigma$. To sum up we have two possibilities: $C_1 = C_2$ or $C_1 = -C_2$. It is clear that the surfaces with $C_1 = -C_2$ can
be obtained as the images of the surfaces with $C_1 = C_2$ under the isometry $S$ of $\s^2 \times \s^2$ given by $S(p, q) = (q, p)$. Hence, we can assume that, up to an ambient isometry, $C_1 = C_2$.

Let $(U, z = x+iy)$ a simply connected complex neighbourhood of $\Sigma$. We claim that there exist a normal reference on $U$ such that $A = 0$ (see Section~\ref{sec:structure-equation}). In fact, consider the real $1$-form $\alpha = -\mathrm{Im}(Adz)$. Since $0 = e^{2u}K^\perp = 2(\bar{A}_z + A_{\bar{z}})$, the $1$-form $\alpha$ is closed and so $\alpha = d\theta$. Hence, $A = -2i\theta_z$. Now, in the reference $\{e^{2i\theta}\xi, e^{-2i\theta}\bar{\xi}\}$ the function $A$ vanishes (cf.\ Remark~\ref{rmk:change-orthonormal-reference}). Therefore we can suppose, up to a change in the orthonormal reference, that $A = 0$.

Since $A = 0$, $\gamma_1, \gamma_2: U \rightarrow \C$ are holomorphic functions  with $|\gamma_1|^2 = |\gamma_2|^2$ (cf.~equations~\eqref{eq:compatibility}). So, there exists $\varphi \in \R$ such that $\gamma_2 = e^{i\varphi}\gamma_1$. Now, in the normal reference $\{e^{-i\varphi/2}\xi, e^{i\varphi/2}\bar{\xi}\}$ the new funcions $\gamma_1$ and $\gamma_2$ are equal and, as $\varphi$ is constant, the property $A = 0$ is still satisfied (cf.\ Remark~\ref{rmk:change-orthonormal-reference}).

Moreover, deriving with respecto to $z$ in the equation $C_1 = C_2$ and taking into account~\eqref{eq:compatibility} we get $ f_1 \bar{\gamma}_1 = f_2 \bar{\gamma}_2$ and so $f_2 = f_1$. Now, from the Frenet equations of $\Phi$, it follows that $(\xi - \bar{\xi})_z = 0$ and hence
\[
\tilde{N}=\frac{i}{\sqrt{2}}(\xi-\bar\xi):\Sigma\rightarrow\R^6
\]
is a constant map. Finally, from the equations~\eqref{eq:J1Phi-xi} and \eqref{eq:J2Phi-xi} we obtain that
$J_1\tilde{N}+J_2\tilde{N}=0$ and so, $\tilde{N}=(0,a)\in \s^2\times\s^2$. So $\Phi(U)$ is contained in $\s^2\times\s^1_a$, where $\mathbb{S}^1_a$ is the great circle orthogonal to the vector $a$.
To finish the proof, we only need a connecteness argument.
\end{proof}

In the rest of the section the relation between minimal surfaces of $\s^2 \times \s^2$ and the sinh-Gordon equation is shown.

\begin{proposition}\label{prop:sinh-Gordon}
Let $\Phi:\Sigma\rightarrow \s^2 \times \s^2$ be a minimal immersion of an orientable surface without complex points. Then, there exist smooth functions $v,w:\Sigma \rightarrow \R$ satisfying:
\[
v_{z\bar{z}} + \frac{|\langle J_1\Phi_z,J_2\Phi_z\rangle|}{4}\sinh(2v) = 0 \quad \text{and} \quad w_{z\bar{z}} +\frac{|\langle J_1\Phi_z,J_2\Phi_z\rangle|}{4}\sinh(2w) = 0,
\]
where $z = x+iy$ is any conformal parameter on $\Sigma$.
\end{proposition}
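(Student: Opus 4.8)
The plan is to produce $v$ and $w$ explicitly out of the two K\"ahler functions. By hypothesis $\Phi$ has no complex points, i.e.\ $C_1^2<1$ and $C_2^2<1$ at every point of $\Sigma$ (see the discussion after Definition~\ref{def:Hopf-differential}), so $\arctanh C_j$ is a well-defined smooth function on $\Sigma$ for $j=1,2$. I would set
\[
v=\tfrac12\bigl(\arctanh C_1+\arctanh C_2\bigr),\qquad w=\tfrac12\bigl(\arctanh C_1-\arctanh C_2\bigr).
\]
The motivation for this choice is that, up to the conformal factor, $\sinh(2v)$ and $\sinh(2w)$ are the signed ratios comparing the holomorphic and antiholomorphic energy densities of the harmonic components $\Phi_1$ and $\Phi_2$; but for the proof only the two displayed formulas are needed.

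Next I would rewrite the nonlinear terms. Writing $\alpha=\arctanh C_1$ and $\beta=\arctanh C_2$, so that $\sinh\alpha=C_1/\sqrt{1-C_1^2}$, $\cosh\alpha=1/\sqrt{1-C_1^2}$ and similarly for $\beta$, the hyperbolic addition formula gives
\[
\sinh(2v)=\sinh(\alpha+\beta)=\frac{C_1+C_2}{\sqrt{(1-C_1^2)(1-C_2^2)}},\qquad \sinh(2w)=\frac{C_1-C_2}{\sqrt{(1-C_1^2)(1-C_2^2)}}.
\]
On the other hand, from $|\gamma_j|^2=e^{2u}(1-C_j^2)/2$ and $\prodesc{J_1\Phi_z}{J_2\Phi_z}=\gamma_1\gamma_2$ one gets $|\prodesc{J_1\Phi_z}{J_2\Phi_z}|=\tfrac{e^{2u}}{2}\sqrt{(1-C_1^2)(1-C_2^2)}$, which is exactly~\eqref{eq:modulo-diferencial-Hopf}. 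Multiplying, the square roots cancel cleanly and
\[
\frac{|\prodesc{J_1\Phi_z}{J_2\Phi_z}|}{4}\sinh(2v)=\frac{e^{2u}}{8}(C_1+C_2),\qquad \frac{|\prodesc{J_1\Phi_z}{J_2\Phi_z}|}{4}\sinh(2w)=\frac{e^{2u}}{8}(C_1-C_2).
\]

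Finally I would compute the linear side with Lemma~\ref{lm:properties-fundamental-data}.\eqref{eq:laplacian-logarithm}. Since $1\pm C_j>0$ everywhere, subtracting the two cases of that formula yields $\Delta\log(1+C_j)-\Delta\log(1-C_j)=-2C_j$, and since $\arctanh C_j=\tfrac12\bigl(\log(1+C_j)-\log(1-C_j)\bigr)$ this gives $\Delta\,\arctanh C_j=-C_j$ for $j=1,2$. Hence $\Delta v=-\tfrac12(C_1+C_2)$ and $\Delta w=-\tfrac12(C_1-C_2)$. Using $\Delta=4e^{-2u}\partial_z\partial_{\bar z}$ we obtain $v_{z\bar z}=\tfrac{e^{2u}}{4}\Delta v=-\tfrac{e^{2u}}{8}(C_1+C_2)$, which by the previous step equals $-\tfrac{|\prodesc{J_1\Phi_z}{J_2\Phi_z}|}{4}\sinh(2v)$; the computation for $w$ is identical. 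This yields both sinh-Gordon equations. The only genuine obstacle is guessing the correct ansatz for $v$ and $w$; once the combinations of $\arctanh C_1$ and $\arctanh C_2$ are in hand, the argument reduces to the addition formula together with a one-line application of the Lemma.
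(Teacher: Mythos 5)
Your proposal is correct and is essentially the paper's own proof: your $v$ coincides with the paper's and your $w$ is its negative (immaterial, since the sinh-Gordon equation is odd in $w$), and both arguments rest on the same ingredients, namely Lemma~\ref{lm:properties-fundamental-data}.\eqref{eq:laplacian-logarithm}, identity~\eqref{eq:modulo-diferencial-Hopf}, and $\Delta f = 4e^{-2u}f_{z\bar z}$. The only cosmetic difference is that you phrase the ansatz via $\arctanh C_j$ and hyperbolic addition formulas where the paper writes the same functions as logarithms and passes through $\Delta v=-\tfrac12\bigl(\tanh(v+w)+\tanh(v-w)\bigr)$.
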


\begin{proof}
Since $\Phi$ has not complex points, the functions $C_j$ satisfy $C_j^2< 1$ and so we can define
$v, w:\Sigma \rightarrow \R$ by
\[
2v = \log \sqrt{\frac{(1+C_1)(1+C_2)}{(1-C_1)(1-C_2)}}, \qquad 2w = \log \sqrt{\frac{(1-C_1)(1+C_2)}{(1+C_1)(1-C_2)}},
\]
i.e., $C_1 = \tanh(v - w)$ and $C_2 = \tanh(v+w)$. Now, using Lemma~\ref{lm:properties-fundamental-data}.\eqref{eq:laplacian-logarithm}, we get that
\[
\begin{split}
\Delta v &= -\frac{1}{2}\bigl( \tanh(v+w) + \tanh(v-w)\bigr), \\
\Delta w &= -\frac{1}{2}\bigl(\tanh(v+w) - \tanh(v-w)\bigr).
\end{split}
\]

Let $z = x+iy$ be a conformal parameter on $\Sigma$. Then, from~\eqref{eq:modulo-diferencial-Hopf}, we get
\[
e^{2u} = \frac{2|\langle J_1\Phi_z,J_2\Phi_z\rangle|}{\sqrt{(1-C_1^2)(1-C_2^2)}} = 2|\langle J_1\Phi_z,J_2\Phi_z\rangle|\cosh(v+w)\cosh(v-w).
\]
Finally, taking into account that in local coordinates $\Delta f = 4e^{-2u}f_{z\bar{z}}$, we get the equations for $v_{z\bar{z}}$ and $w_{z\bar{z}}$.
\end{proof}

Conversely, we obtain the following result.

\begin{theorem}\label{thm:sinh-Gordon->minima-s2xs2}
Let $v,w:\C\rightarrow\R$ be two solutions of the sinh-Gordon equation
 \[
v_{z\bar{z}} + \frac{1}{2}\sinh(2v) = 0 \quad \text{and} \quad w_{z\bar{z}} +\frac{1}{2}\sinh(2w) = 0.
\]
Then there exists a $1$-parameter family of minimal immersions $\Phi_{t}:\C\rightarrow\s^2\times\s^2$ without complex points, whose induced metric is $4\cosh (v+w)\cosh (v-w)|dz|^2$, whose Kähler functions are $C_1 = \tanh(v-w)$ and $C_2 = \tanh(v+w)$ and whose holomorphic Hopf $2$-differentials are $\Theta_{t}=e^{it}dz\otimes dz$.
\end{theorem}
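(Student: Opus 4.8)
The plan is to construct the fundamental data $(u, A, C_j, \gamma_j, f_j:\,j=1,2)$ explicitly out of $v$ and $w$ and then invoke Proposition~\ref{prop:compatibility-equations}, which manufactures the desired immersion as soon as the compatibility system~\eqref{eq:compatibility} is verified. Guided by the proof of Proposition~\ref{prop:sinh-Gordon} and by the prescribed metric and Hopf differential, I would set $C_1 = \tanh(v-w)$, $C_2 = \tanh(v+w)$, and define $e^{2u} = 4\cosh(v+w)\cosh(v-w)$; then $C_j^2 < 1$ everywhere, so the resulting surface has no complex points, and the induced metric $e^{2u}|\df z|^2$ is exactly the prescribed one. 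The algebraic equation $|\gamma_j|^2 = e^{2u}(1-C_j^2)/2$ forces $|\gamma_1|^2 = 2\cosh(v+w)/\cosh(v-w)$ and $|\gamma_2|^2 = 2\cosh(v-w)/\cosh(v+w)$, whence $|\gamma_1\gamma_2| = 2$, which is precisely what gives the Hopf differential unit modulus. The free constant phase is where the parameter $t$ enters: I would take $\gamma_1 = |\gamma_1|e^{it/2}$ and $\gamma_2 = |\gamma_2|e^{it/2}$, so that $\gamma_1\gamma_2 = 2e^{it}$. Since the hyperbolic cosines never vanish, these are globally defined smooth nonvanishing functions on $\C$.

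Next I would read off $f_j$ and $A$ from the equations of~\eqref{eq:compatibility} other than the one for $(f_j)_{\bar z}$. Using $(C_1)_z = (1-C_1^2)(v-w)_z$ together with the formula for $|\gamma_1|^2$, the equation $(C_1)_z = 2ie^{-2u}f_1\bar{\gamma}_1$ is solved by $f_1 = -i(v-w)_z\gamma_1$, and similarly $f_2 = -i(v+w)_z\gamma_2$. Imposing $(\gamma_j)_{\bar z} = (-1)^{j+1}\bar{A}\gamma_j$ with the phases chosen constant forces $\bar{A} = (\log\gamma_1)_{\bar z} = \tfrac12\bigl(C_2(v+w)_{\bar z} - C_1(v-w)_{\bar z}\bigr)$, that is $A = \tfrac12\bigl(C_2(v+w)_z - C_1(v-w)_z\bigr)$; the same $A$ is automatically consistent with the equation for $\gamma_2$ because $\gamma_1\gamma_2$ is constant. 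At this stage every equation in~\eqref{eq:compatibility} except the one for $(f_j)_{\bar z}$ holds by construction.

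The heart of the argument, and the only place the sinh-Gordon equation is used, is the verification of $(f_j)_{\bar z} = i\frac{e^{2u}}{4}C_j\gamma_j + (-1)^{j+1}\bar{A}f_j$. Differentiating $f_1 = -i(v-w)_z\gamma_1$ and substituting $(\gamma_1)_{\bar z} = \bar{A}\gamma_1$, the term proportional to $\bar{A}f_1$ matches on the nose, and what survives is the scalar identity $-(v-w)_{z\bar z} = \frac{e^{2u}}{4}C_1$. The sinh-Gordon equations $v_{z\bar z} = -\tfrac12\sinh(2v)$ and $w_{z\bar z} = -\tfrac12\sinh(2w)$ give $-(v-w)_{z\bar z} = \tfrac12(\sinh 2v - \sinh 2w) = \cosh(v+w)\sinh(v-w)$, while $\frac{e^{2u}}{4}C_1 = \cosh(v+w)\cosh(v-w)\tanh(v-w) = \cosh(v+w)\sinh(v-w)$, so the two agree; the case $j=2$ is identical using $\sinh 2v + \sinh 2w = 2\cosh(v-w)\sinh(v+w)$. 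I expect this trigonometric reduction to be the main (if routine) obstacle, since one must pair the correct sum-to-product identities with the definitions of $C_j$ and $e^{2u}$ for the terms to cancel.

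With the full system~\eqref{eq:compatibility} in hand, for each fixed $t$ Proposition~\ref{prop:compatibility-equations}, applied on the simply connected domain $\C$, produces a non-complex minimal immersion $\Phi_t:\C\rightarrow\s^2\times\s^2$, unique up to congruence, with these fundamental data. Its induced metric is $e^{2u}|\df z|^2 = 4\cosh(v+w)\cosh(v-w)|\df z|^2$, its K\"ahler functions are $C_1 = \tanh(v-w)$ and $C_2 = \tanh(v+w)$, and since $\prodesc{J_1\Phi_z}{J_2\Phi_z} = \gamma_1\gamma_2 = 2e^{it}$, its Hopf differential is $\Theta_t = \tfrac12\gamma_1\gamma_2\,\df z\otimes\df z = e^{it}\df z\otimes\df z$. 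Letting $t$ vary over $\R$ then yields the asserted $1$-parameter family.
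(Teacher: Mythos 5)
Your proposal is correct and follows essentially the same route as the paper: the same fundamental data $(u, A, C_j, \gamma_j, f_j)$ are written down (the paper states $A=\bigl(\log\sqrt{\cosh(v+w)/\cosh(v-w)}\bigr)_z$, which coincides with your formula), the sinh-Gordon equations enter exactly where you place them, in the verification of the $(f_j)_{\bar z}$ equation, and the immersion is then produced by Proposition~\ref{prop:compatibility-equations}. No gaps.
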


\begin{remark}~\label{rmk:sinh-Gordon->minima-s2xs2}
\begin{enumerate}
	\item The immersions $\Phi_{t}$ are Lagrangian if and only if $w=\pm v$.
	\item \label{rmk:item:sinh-Gordon->minima-s2xR} From Proposition~\ref{prop:curvaturanormalnula}, the minimal immersions $\Phi_t$ are non-full in $\s^2\times\s^2$ if and only if $v = 0$ or $w = 0$. In this particular case, associated to any solution $v$ of the sinh-Gordon equation, there exists a $1$-parameter family of minimal immersions $\Phi_t$ in $\s^2\times \R$ with induced metric $4\cosh^2 v |dz|^2$. This is a well known result~\cite{HKS}.
\end{enumerate}
\end{remark}

\begin{proof}
For $j=1,2$, we define functions $u,C_j:\C\rightarrow\R$ by
\[
C_1 = \tanh(v - w),\, C_2 = \tanh(v+w) \text{ and } e^{2u} = 4\cosh(v+w)\cosh(v-w).
\]

Moreover, we also defined $\gamma_j, A:\C\rightarrow \C$ by
\begin{gather*}
\gamma_1 =\sqrt 2e^{it/2}\sqrt\frac{\cosh(v+w)}{\cosh(v-w)},\quad
\gamma_2=\sqrt 2e^{it/2}\sqrt\frac{\cosh(v-w)}{\cosh(v+w)}, \\
A=\left(\log\sqrt{\frac{\cosh(v+w)}{\cosh(v-w)}}\right)_{z}.
\end{gather*}

Then, it is easy to check that $2|\gamma_j|^2 = e^{2u}(1-C_j^2)$ and $(\gamma_j)_{\bar{z}} = (-1)^{j+1}\bar{A}\gamma_j$, $j = 1, 2$.

Also, we defined the functions $f_j:\C \rightarrow\C$ by  $f_j = -i\gamma_j(v+(-1)^jw)_z$, $j = 1, 2$,
and so these functions satisfy $(C_j)_z = 2ie^{-2u}f_j\bar{\gamma_j}$. Taking into account that $v$ and $w$ are solutions to the sinh-Gordon equation, we also get that $(f_j)_{\bar{z}} = i\frac{e^{2u}}{4}C_j\gamma_j + (-1)^{j+1}\bar{A}f_j$, $j = 1, 2$.

Therefore, we have proved that, for each $t\in \R$, the tupla $(u, A, C_j, \gamma_j, f_j:\, j = 1, 2)$ fulfills the compatibility equations~\eqref{eq:compatibility} and so, from Proposition~\ref{prop:compatibility-equations}, is the fundamental data of a unique minimal immersion $\Phi_t:\C \rightarrow \s^2 \times \s^2$. Moreover, the immersion $\Phi_t$ has no complex points because, by definition, $C_1^2, C_2^2 < 1$,  the induced metric by $\Phi_t$ is $e^{2u}|dz|^2 = 4\cosh(v+w)\cosh(v-w)|dz|^2$ and the Hopf 2-differential of $\Phi_t$ is given by $\Theta =\frac{1}{2} \gamma_1\gamma_2 dz\otimes dz = e^{it}dz\otimes dz$.
\end{proof}

\section{Gauss map of pair of minimal surfaces of $\s^3$}
\label{sec:gauss-map}
Let $\Sigma$ be a Riemann surface and $\phi:\Sigma\rightarrow\s^3$ a conformal minimal immersion of $\Sigma$ in the $3$-dimensional unit sphere. Following \cite{H,L}, the Hopf differential associated to $\phi$ is the holomorphic $2$-differential on $\Sigma$ defined by
\[
\Theta_\phi(z)=\langle \phi_z,N_z\rangle dz\otimes dz,
\]
where $z = x + iy$ is a complex parameter on $\Sigma$ and $N$ is the unit normal vector field to $\phi$ such that $\{\phi_x, \phi_y, \phi, N\}$ is a positively oriented frame in $\R^4$.

Considering $\s^3\subset\R^4$, the Gauss map of $\phi:\Sigma\rightarrow\R^4$ is the map $$(\nu^+_\phi,\nu^-_\phi):\Sigma\rightarrow\s^2_+\times\s^2_-$$ defined by
\[
\nu^{\pm}_\phi(p)=\frac{1}{\sqrt 2}(e_1\wedge e_2 \pm\phi(p)\wedge N_p),
\]
where $\{e_1,e_2\}$ is an oriented orthonormal basis in $T_p\Sigma$ (see Section~\ref{sec:preliminares}). Since $\phi: \Sigma \rightarrow \R^4$ has parallel mean curvature vector, from~\cite{RV}, $\nu^\pm_\phi$ are harmonic maps from $\Sigma$ into $\s^2$.

Also, it is interesting to remark a well-known result (see~\cite{H}) that will be used later.
\begin{quote}
{\it For any solution $v:\C \rightarrow \R$ of the sinh-Gordon equation $v_{z\bar{z}} + \frac{1}{2}\sinh(2v) = 0$ there exists a $1$-parameter family $\phi_t:\C \rightarrow \s^3$ of minimal immersions whose induced metric is $e^{2v}\abs{dz}^2$ and whose Hopf differential is $\Theta_{\phi_t}(z) = \frac{i}{2}e^{it}dz\otimes dz$}.
\end{quote}
\begin{theorem}\label{thm:aplicacion-gauss-minima-s3}
Let $\Sigma$ be a Riemann surface and $\phi,\psi:\Sigma\rightarrow\s^3$ two conformal minimal immersions with the same Hopf differentials $\Theta_{\phi}=\Theta_{\psi}$ and induced metrics $g_{\phi}$ and $g_{\psi}$ respectively. Then
\[
\nu_{\{\phi, \psi\}}=(\nu^+_{\phi},\nu^-_{\psi}):\Sigma\rightarrow\s^2_+\times\s^2_-
\]
 is a conformal minimal immersion. Moreover, the induced metric by $\nu_{\{\phi, \psi\}}$ is
\[
g=\frac{1}{2}\Bigl((2+|\sigma_{\phi}|^2)g_{\phi}+ (2+|\sigma_{\psi}|^2)g_{\psi}\Bigr),
\]
the Kähler functions are
 \[
 C_1=\frac{|\sigma_{\psi}|-|\sigma_{\phi}|}{|\sigma_{\psi}|+|\sigma_{\phi}|},\quad C_2=\frac{2-|\sigma_{\phi}||\sigma_{\psi}|}{2+|\sigma_{\phi}||\sigma_{\psi}|},
 \]
and the Hopf differential is $\Theta = -2i\Theta_\phi = -2i\Theta_\psi$, where $|\sigma_{\phi}|$  and $|\sigma_{\psi}|$ are computed with the metric $g_{\phi}$ and $g_{\psi}$ respectively.  We will say that $\nu_{\{\phi, \psi\}}$ is the Gauss map of the pair of minimal immersions $\{\phi,\psi\}$.
\end{theorem}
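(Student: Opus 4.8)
The plan is to work in a conformal parameter $z=x+iy$ on $\Sigma$ and reduce everything to the Gauss--Weingarten equations of the two minimal surfaces in $\s^3$. Writing $g_\phi = e^{2a}\abs{dz}^2$ and $\Theta_\phi = h\,\df z\otimes\df z$, conformality and minimality of $\phi$ yield the structure equations $\phi_{zz}=2a_z\phi_z - hN$, $\phi_{z\bar z} = -\tfrac12 e^{2a}\phi$ and $N_z = 2he^{-2a}\phi_{\bar z}$, with the analogous equations for $\psi$ ($g_\psi = e^{2b}\abs{dz}^2$, normal $\tilde N$). The hypothesis $\Theta_\phi = \Theta_\psi$ is precisely the statement that the coefficient $h$ is common to $\phi$ and $\psi$, and this is the fact that will drive the whole construction.

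First I would differentiate the Gauss map. Since $\nu^+_\phi$ takes values in $\s^2_+\subset\Lambda^2_+\R^4$, its differential is tangent to $\s^2_+$, so $(\nu^+_\phi)_z$ lies in $\mathrm{span}(E^+_2,E^+_3)$; substituting the structure equations into the adapted frame $\{e_1,e_2,\phi,N\}$ I expect $(\nu^+_\phi)_z = PE^+_2 + QE^+_3$ with $P=-he^{-a}+\tfrac{i}{2}e^{a}$ and $Q=-ihe^{-a}+\tfrac12 e^{a}$, and symmetrically $(\nu^-_\psi)_z = P'E^-_2+Q'E^-_3\in\Lambda^2_-\R^4$. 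From these, $\vprodesc{(\nu^+_\phi)_z}{(\nu^+_\phi)_z}=P^2+Q^2=-2ih$ while $\vprodesc{(\nu^-_\psi)_z}{(\nu^-_\psi)_z}=+2ih$. Because the metric on $\s^2_+\times\s^2_-$ splits as a sum, these two contributions cancel and $\vprodesc{(\nu_{\{\phi,\psi\}})_z}{(\nu_{\{\phi,\psi\}})_z}=0$, i.e.\ $\nu_{\{\phi,\psi\}}$ is conformal. This cancellation is the heart of the theorem and is exactly where $\Theta_\phi=\Theta_\psi$ is used. Computing instead $\abs{P}^2+\abs{Q}^2$ and invoking the minimal-surface identity $\abs{h}^2 = \tfrac18 e^{4a}\abs{\sigma_\phi}^2$ gives $\abs{(\nu^+_\phi)_z}^2 = \tfrac14 e^{2a}(2+\abs{\sigma_\phi}^2)>0$, so $\nu_{\{\phi,\psi\}}$ is an immersion, and summing the two pieces produces the stated metric $g=\tfrac12\big((2+\abs{\sigma_\phi}^2)g_\phi+(2+\abs{\sigma_\psi}^2)g_\psi\big)$.

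Minimality then comes for free: $\nu^+_\phi$ and $\nu^-_\psi$ are harmonic into $\s^2_\pm$ by the Ruh--Vilms theorem already quoted, so $\nu_{\{\phi,\psi\}}$ is harmonic into the product, and a conformal harmonic immersion is minimal. For the Hopf differential I would use the identity $\Theta = \Theta_1 = \prodesc{(\nu^+_\phi)_z}{(\nu^+_\phi)_z}\df z\otimes\df z$ recorded after Definition~\ref{def:Hopf-differential}, which with the computation above gives $\Theta = -2ih\,\df z\otimes\df z = -2i\Theta_\phi$, and symmetrically $-2i\Theta_\psi$.

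The delicate point, and the step I expect to cost the most, is the identification of the K\"ahler functions through $\mathrm{Jac}(\nu^+_\phi)=\tfrac{C_1+C_2}{2}$ and $\mathrm{Jac}(\nu^-_\psi)=\tfrac{C_1-C_2}{2}$. Here I would pull back the area forms of $\s^2_\pm$, which demands care with the orientation of $\s^2_-$ relative to the product orientation fixed in Section~\ref{sec:preliminares}. Using $(\nu^+_\phi)_z=PE^+_2+QE^+_3$ I expect $(\nu^+_\phi)^*\omega_{\s^2_+}=e^{2a}\big(1-\tfrac12\abs{\sigma_\phi}^2\big)\,\df x\wedge\df y$ and the analogous expression for $\nu^-_\psi$. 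These still carry the conformal factors $e^{2a},e^{2b}$, and the crucial simplification is that equality of the Hopf differentials forces $e^{2a}\abs{\sigma_\phi}=e^{2b}\abs{\sigma_\psi}$ (both equal $2\sqrt2\abs{h}$); substituting this cancels the conformal factors, and after setting $p=\abs{\sigma_\phi}$, $q=\abs{\sigma_\psi}$ the two Jacobians become $\tfrac{2/p-p}{D}$ and $\tfrac{q-2/q}{D}$ with common denominator $D=\tfrac2p+p+\tfrac2q+q$. Adding and subtracting, the factors $(2+pq)$ and $(p+q)$ cancel and leave exactly $C_1=\tfrac{q-p}{q+p}$ and $C_2=\tfrac{2-pq}{2+pq}$, as claimed.
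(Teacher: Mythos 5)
Your proposal is correct and follows essentially the same route as the paper: the same Frenet equations, the same expressions for $(\nu^+_\phi)_z$ and $(\nu^-_\psi)_z$ in the adapted frames $E^\pm_2,E^\pm_3$, the cancellation of the $(2,0)$-parts coming from $\Theta_\phi=\Theta_\psi$, harmonicity via Ruh--Vilms, and the identity $8\abs{\theta}^2=e^{4v}\abs{\sigma_\phi}^2=e^{4w}\abs{\sigma_\psi}^2$ to extract the metric. Your extraction of $C_1,C_2$ via the Jacobians of the two components and of $\Theta$ via $\Theta=\Theta_1$ is only a cosmetic repackaging of the paper's computation of $\vprodesc{J_j\nu_z}{\nu_{\bar z}}$, and all your intermediate formulas check out.
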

\begin{remark}
As $\Theta_{\phi}=\Theta_{\psi}$, the zeroes of $\sigma_{\phi}$ and $\sigma_{\psi}$ coincide and hence the functions $C_1$ and $C_2$ satisfy $-1 < C_1 < 1$ and $-1 < C_2 \leq 1$. Moreover, $C_2(p) = 1$ if and only if $\sigma_\phi(p) = \sigma_\psi(p) = 0$.
\end{remark}

\begin{remark}\label{rmk:aplicacion-gauss-minima-s3}~
\begin{enumerate}[(i)]
	\item If $\psi = \phi$, then $\nu_{\{\phi, \phi\}}$ is the Gauss map of $\phi$ and $C_1 = 0$, i.e., $\nu_{\{\phi, \phi\}}$ is a Lagrangian immersion. This case was studied in~\cite{CU} where it was showed that the diagonal $\mathbf{D}$ is the Gauss map of the totally geodesic surface $\s^2  \subset \s^3$ and also that the Gauss map of the Clifford torus $\{(z, w) \in \s^3 \subset \C^2\, |\, |z| = |w| = 1/\sqrt{2} \}$ is a two-fold covering of the totally geodesic torus $\mathbf{T}$.
	\item If $S$ is the isometry of $\s^2\times\s^2$ given by $S(p,q)=(q,p)$, then $\nu_{\{\psi, \phi\}}=S\circ (I,I^{-1})\circ\nu_{\{\phi, \psi\}}$, where $I:\s^2_+\rightarrow\s^2_-$ is the isometry defined in Section~\ref{sec:preliminares}.
	\item Given $A \in O(4)$, then $\nu_{\{A\phi, \psi\}}$ is congruent to $\nu_{\{\phi, \psi\}}$. More precisely, if $\det A = 1$ then $\nu_{\{A\phi, \psi\}} = (\hat{A}\times \mathrm{Id})\circ \nu_{\{\phi, \psi\}}$ and if $\det A = -1$ then $\nu_{\{A\phi, \psi\}} = \bigl((\hat{A}\circ I) \times \mathrm{Id}\bigr)\circ \nu_{\{\phi, \psi\}}$, where $\hat{A}: \Lambda^2 \R^4 \rightarrow \Lambda^2 \R^4$ was defined in Section~\ref{sec:preliminares} and $\mathrm{Id}$ denotes the identity map.
\end{enumerate}
\end{remark}

\begin{remark}\label{rmk:aplicacion-gauss-ramificacion}
Following the proof of Theorem~\ref{thm:aplicacion-gauss-minima-s3}, if only one of the immersions $\phi$ or $\psi$ is branched, then $\nu_{\{\phi, \psi\}}$ is also a conformal minimal immersion in $\s^2 \times \s^2$. Such situation happens when one consider a minimal immersion $\phi: \Sigma \rightarrow \s^3$ and $\psi$ is its \emph{polar} immersion (possibly branched) $N: \Sigma \rightarrow \s^3$, where $N$ is a unit normal vector field to $\phi$ (cf.~\cite[Proposition 10.1]{L}). $N$ is also a minimal immersion and since both of them have the same Hopf differentials, then the Gauss map $\nu_{\{\phi, N\}}$ is a minimal immersion of $\Sigma$ in $\s^2 \times \s^2$. Nevertheless, it is easy to check that $\nu^-_N = -\nu^-_\phi$ and so, the Gauss map of the pair $\{\phi, N\}$ is congruent, by the isometry $\mathrm{Id}\times -\mathrm{Id}$ of $\s^2 \times \s^2$, to the Gauss map of $\phi$.
\end{remark}

\begin{proof}
Let $z = x + iy$ a complex coordinate in $\Sigma$ and $N$ and $\hat{N}$ be the unit normal vector fields to $\phi$ and $\psi$ respectively such that $\{\phi_x,\phi_y,\phi, N\}$ and $\{\psi_x,\psi_y,\psi, \hat{N}\}$ are positively oriented frames in $\R^4$. The induced metric by $\phi$ and $\psi$ are given by
$g_\phi = e^{2v}|dz|^2$ and $g_\psi = e^{2w}|dz|^2$. Also, we can write the Hopf differentials $\Theta_\phi(z) = \Theta_\psi(z) = \theta(z) dz \otimes d z$.

The Frenet equations of $\phi$ and $\psi$ are given by (see~\cite{H,L}):
\begin{align*}
\phi_{zz} &= 2v_z\phi_z-\theta N, & \phi_{z\bar{z}} &= -e^{2v}\phi/2,& N_z &= 2e^{-2v}\theta\phi_{\bar{z} }, \\
\psi_{zz}&=2w_z\psi_z-\theta\hat{N},& \psi_{z\bar{z}}&=-e^{2w}\psi/2, & \hat{N}_z&=2e^{-2w}\theta\psi_{\bar{z}}.
\end{align*}
On the other hand, the components of the Gauss map $\nu^+_{\phi}$ and $\nu^-_{\psi}$ can be written as
\begin{eqnarray*}
\nu^+_{\phi}(z)=\frac{1}{\sqrt 2}(-2ie^{-2v}\phi_z\wedge \phi_{\bar{z}}+\phi\wedge N),\\
\nu^-_{\psi}(z)=\frac{1}{\sqrt 2}(-2ie^{-2w}\psi_z\wedge \psi_{\bar{z}}-\psi\wedge \hat{N}).
\end{eqnarray*}
Using the above equations, it is straightforward to check that
\[
\begin{split}
(\nu^+_\phi)_z &= \frac{e^{v}}{2}(i - 2\theta e^{-2v})E_2^+(z) + \frac{e^{v}}{2}(1-2i\theta e^{-2v})E^+_3(z), \\
(\nu^-_\psi)_z &= \frac{e^{w}}{2}(i + 2\theta e^{-2w})E_2^-(z) - \frac{e^{w}}{2}(1+2i\theta e^{-2w})E^-_3(z),
\end{split}
\]
where $E^+_j(z),\,j = 2, 3$, is the oriented orthonormal reference of $\s^2_+$ at the point $\nu^+_{\phi}(z)$ given by
\[
E^{+}_2(z)=\frac{e^{-v}}{\sqrt 2}(\phi_x\wedge\phi+N\wedge\phi_y),\quad
E^+_3(z)=\frac{e^{-v}}{\sqrt 2}(\phi_x\wedge N+\phi_y\wedge\phi),
\]
and $E^-_j(z),\,j = 2, 3$, is the oriented orthonormal reference of $\s^2_-$ at the point $\nu^-_{\psi}(z)$ given by
\[
E^{-}_2(z)=\frac{e^{-w}}{\sqrt 2}(\psi_x\wedge\psi - \hat{N}\wedge\psi_y),\quad
E^-_3(z)=\frac{e^{-w}}{\sqrt 2}(\psi_x\wedge \hat{N} - \psi_y\wedge\psi).
\]
Therefore, taking into account the previous equations and that $JE^\pm_2 = E^\pm_3$, we obtain:
\begin{align*}
&\vprodesc{(\nu^+_{\phi})_z}{(\nu^+_{\phi})_z} = -2i\theta,& &\vprodesc{(\nu^-_{\psi})_z}{(\nu^-_{\psi})_z} =2i\theta,\\
&|(\nu^+_{\phi})_z|^2 =\frac{1}{2}(e^{2v}+4e^{-2v}|\theta|^2),& &|(\nu^-_{\psi})_z|^2=\frac{1}{2}(e^{2w}+4e^{-2w}|\theta|^2),\\
&\vprodesc{J(\nu^+_{\phi})_z}{(\nu^+_{\phi})_{\bar{z}}} = \frac{i}{2}(e^{2v}-4e^{-2v}|\theta|^2),& &\vprodesc{J(\nu^-_{\psi})_z}{(\nu^-_{\psi})_{\bar{z}}} =\frac{-i}{2}(e^{2w}-4e^{-2w}|\theta|^2).
\end{align*}

Hence
\[
\vprodesc{\nu_z}{\nu_z}=0,\quad\quad |\nu_z|^2=\frac{1}{2}(e^{2v}+e^{2w}+4|\theta|^2(e^{-2v}+e^{-2w})),
\]
where $\nu\equiv\nu_{\{\phi,\psi\}}$.

Now, from [RV], $\nu^+_{\phi}$ and $\nu^-_{\psi}$ are harmonic maps, and so $\nu$ is also a harmonic map. But previous equation says that $\nu$ is conformal, and so $\nu$ is a minimal immersion.

As $8|\theta|^2=e^{4v}|\sigma_{\phi}|^2=e^{4w}|\sigma_{\psi}|^2$, we easily obtain the expression of $g$. The computation of $C_j$ comes from the fact that $C_j = -2ie^{-2u} \vprodesc{J_j \nu_z}{\nu_{\bar{z}}}$ and the computations above. Finally, the previous equations also say that
\[
\langle J_1\nu_z,J_2\nu_z\rangle=\langle J(\nu_{\phi}^+)_z,J(\nu_{\phi}^+)_z\rangle-\langle J(\nu_{\psi}^-)_z,J(\nu_{\psi}^-)_z\rangle=-4i\theta,
\]
which finishes the proof.
\end{proof}

In the following result, the $1$-parameter family of minimal immersions in $\s^2 \times \R$ associated to a solution of the sinh-Gordon equation (cf.~Theorem~\ref{thm:sinh-Gordon->minima-s2xs2} and Remark~\ref{rmk:sinh-Gordon->minima-s2xs2}.\eqref{rmk:item:sinh-Gordon->minima-s2xR}) is explicitely described in terms of the corresponding $1$-parameter family of minimal immersions in $\s^3$, via the Gauss map (see the beginning of Section~\ref{sec:gauss-map}).

\begin{corollary}\label{cor:gauss-map-minimal-S2xR}
Let $v:\C \rightarrow \R$ be a solution of the sinh-Gordon equation $v_{z\bar{z}} + \frac{1}{2}\sinh(2v) = 0$. Then, the $1$-parameter family of isometric minimal immersions $\Phi_t: (\C,4\cosh^2v|dz|^2) \rightarrow \s^2 \times \R$ with Hopf differential $\Theta_t(z)=e^{it}dz\otimes dz$  is given by
\[
\Phi_t(z) = \left(\nu^+_{\phi_t}(z), 2\,\mathrm{Im}(z e^{it/2})\right),
\]
where $\phi_t: (\C,e^{2v}|dz|^2) \rightarrow \s^3$ is the $1$-parameter family of isometric minimal immersions with Hopf differentials $\Theta_{\phi_t}=\frac{i}{2}e^{it}dz\otimes dz$.
\end{corollary}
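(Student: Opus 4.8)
The plan is to check that the explicit map $\Phi_t=(\nu^+_{\phi_t},h)$, with $h(z)=2\,\mathrm{Im}(ze^{it/2})$, is a conformal harmonic (hence minimal) immersion into $\s^2\times\R$ carrying exactly the induced metric, K\"ahler functions and Hopf differential of the family produced by Theorem~\ref{thm:sinh-Gordon->minima-s2xs2} in the degenerate case $w\equiv 0$ (which, by Proposition~\ref{prop:curvaturanormalnula} and Remark~\ref{rmk:sinh-Gordon->minima-s2xs2}, is precisely the non-full family living in $\s^2\times\R$). Once all these invariants are matched, the uniqueness part of Proposition~\ref{prop:compatibility-equations} identifies $\Phi_t$ with the immersion of that theorem, up to congruence.

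First I would record the data of the first factor. Since $\phi_t$ has induced metric $e^{2v}\abs{dz}^2$ and Hopf differential $\theta=\frac{i}{2}e^{it}$, so that $\abs{\theta}^2=1/4$, the computations already carried out in the proof of Theorem~\ref{thm:aplicacion-gauss-minima-s3} specialize to
\[
\abs{(\nu^+_{\phi_t})_z}^2=\tfrac12\bigl(e^{2v}+4e^{-2v}\abs{\theta}^2\bigr)=\cosh 2v,\qquad
\vprodesc{(\nu^+_{\phi_t})_z}{(\nu^+_{\phi_t})_z}=-2i\theta=e^{it},
\]
together with $\vprodesc{J(\nu^+_{\phi_t})_z}{(\nu^+_{\phi_t})_{\bar z}}=\tfrac{i}{2}(e^{2v}-4e^{-2v}\abs{\theta}^2)=i\sinh 2v$. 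For the $\R$-factor a direct computation gives $h_z=-ie^{it/2}$, whence $\abs{h_z}^2=1$, $h_z^2=-e^{it}$ and $h_{z\bar z}=0$. Adding the two factors yields $\abs{(\Phi_t)_z}^2=\cosh 2v+1=2\cosh^2v$, so the induced metric is $4\cosh^2v\,\abs{dz}^2$, and $\prodesc{(\Phi_t)_z}{(\Phi_t)_z}=e^{it}-e^{it}=0$, so $\Phi_t$ is conformal. Harmonicity of $\nu^+_{\phi_t}$ into $\s^2$ (from~\cite{RV}) together with $h_{z\bar z}=0$ shows both factors are harmonic, so $\Phi_t$ is a conformal harmonic, i.e.\ minimal, immersion, with no branch points because the metric is everywhere positive.

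It remains to match the K\"ahler functions and the Hopf differential. I would realize $\s^2\times\R$ as the universal cover of the totally geodesic $\s^2\times\s^1\subset\s^2\times\s^2$, with $h$ an arclength parameter on a great circle $\gamma$ of the second factor, so that the second component is $\Phi_2=\gamma\circ h$. Since $\prodesc{Jv}{v}=0$ for every real tangent vector $v$, the second factor contributes $\abs{h_z}^2\prodesc{J\gamma'}{\gamma'}=0$ to $\prodesc{J_j\Phi_z}{\Phi_{\bar z}}$; hence, using $C_j=-2ie^{-2u}\prodesc{J_j\Phi_z}{\Phi_{\bar z}}$ and $e^{2u}=4\cosh^2v$, only the first factor contributes and
\[
C_1=C_2=-2ie^{-2u}\,(i\sinh 2v)=2e^{-2u}\sinh 2v=\tanh v,
\]
matching the $w=0$ case. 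In the same realization $\Theta_1=\prodesc{(\Phi_1)_z}{(\Phi_1)_z}\,dz\otimes dz=e^{it}\,dz\otimes dz$ (while $\Theta_2=h_z^2\,dz\otimes dz=-e^{it}\,dz\otimes dz$, consistently $\Theta=\Theta_1=-\Theta_2$), so the Hopf differential is $e^{it}\,dz\otimes dz$, as claimed. With metric, K\"ahler functions and Hopf differential all agreeing with those of Theorem~\ref{thm:sinh-Gordon->minima-s2xs2}, Proposition~\ref{prop:compatibility-equations} gives the identification.

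The step I expect to require the most care is the bookkeeping in the last paragraph: translating between the $\s^2\times\R$ picture and the $\s^2\times\s^1\subset\s^2\times\s^2$ picture so that the K\"ahler functions and the Hopf differential are computed with the correct normalizations (in particular the roles of $\theta=\frac{i}{2}e^{it}$ and $\abs{\theta}^2=1/4$), and verifying that the $\R$-factor contributes nothing to $C_1,C_2$ yet contributes $-e^{it}$ to $\Theta_2$. Everything else reduces to substitution into identities established earlier in the paper.
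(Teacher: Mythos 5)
Your proposal is correct, and it reaches the conclusion by a genuinely different route from the paper. The paper never works with the $\R$-factor directly: it introduces the explicit flat family $\psi_t(z)=\tfrac{1}{\sqrt2}\bigl(e^{i\mathrm{Re}[(1+i)ze^{it/2}]},e^{i\mathrm{Im}[(1+i)ze^{it/2}]}\bigr)$ associated to the trivial solution $w=0$, applies Theorem~\ref{thm:aplicacion-gauss-minima-s3} to the pair $\{\phi_t,\psi_t\}$ (which delivers minimality, the metric $4\cosh^2 v\,\abs{dz}^2$ and the Hopf differential $e^{it}dz\otimes dz$ all at once), and then computes $\nu^-_{\psi_t}$ explicitly to see that it traces the great circle in $(E_1^-)^\perp$ with arclength parameter $2\,\mathrm{Im}(ze^{it/2})$, so that passing to the universal cover yields the stated formula. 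You instead bypass the auxiliary immersion $\psi_t$ entirely: you take the candidate map $(\nu^+_{\phi_t},h)$ as given, verify by hand that it is conformal ($e^{it}+h_z^2=0$) and harmonic (Ruh--Vilms for the first factor, $h_{z\bar z}=0$ for the second), compute the metric, the K\"ahler functions $C_1=C_2=\tanh v$ and the Hopf differential in the $\s^2\times\s^1\subset\s^2\times\s^2$ picture, and then invoke the uniqueness of Proposition~\ref{prop:compatibility-equations} to identify the result with the family of Theorem~\ref{thm:sinh-Gordon->minima-s2xs2} with $w=0$. Your computations check out (including the sign bookkeeping $\Theta_1=-\Theta_2=e^{it}dz\otimes dz$ and the vanishing contribution of the $\R$-factor to $C_j$). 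What the paper's route buys is that minimality and all invariants come for free from Theorem~\ref{thm:aplicacion-gauss-minima-s3}, at the cost of having to guess $\psi_t$ and compute $\nu^-_{\psi_t}$; what your route buys is directness, at the cost of an extra verification step and a final appeal to uniqueness that the paper makes only implicitly (and which, as in the paper's own proof of Theorem~\ref{thm:minima-s2xs2-localmente-aplicacion-gauss}, tacitly uses that metric, K\"ahler functions and Hopf differential determine the fundamental data up to a rotation of the normal frame).
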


\begin{proof}
Let $w = 0$ be the trivial solution of the sinh-Gordon equation. The $1$-parameter family of flat minimal immersions $\psi_t: (\C,|dz|^2) \rightarrow \s^3$ associated to $w = 0$ and with Hopf $2$-differential $\Theta_{\psi_t} = \frac{i}{2}e^{it}dz \otimes dz$ is the deformation by isometries of the universal cover of the Clifford torus given by:
\[
\psi_t(z) = \frac{1}{\sqrt{2}}\left(e^{i\textrm{Re}[(1+i)ze^{it/2}]}, e^{i\textrm{Im}[(1+i)ze^{it/2}]}  \right),
\]
where $\s^3 = \{(u_1,u_2)\in \C^2:\, \abs{u_1}^2 + \abs{u_2}^2 = 1\}$. Then $\phi_t$ and $\psi_t$ are conformal immersions with the same Hopf differential and we can apply Theorem~\ref{thm:aplicacion-gauss-minima-s3} to conclude that $(\nu^+_{\phi_t}, \nu^-_{\psi_t}): \C \rightarrow \s^2 \times \s^2$ is a conformal minimal immersion. Its induced metric, in view of Theorem~\ref{thm:aplicacion-gauss-minima-s3}, is
\[
g = \frac{1}{2}\bigl( (2 + \abs{\sigma_\phi}^2)e^{2v}\abs{dz}^2 + 4\abs{dz}^2\bigr)  = 4\cosh^2(v)\abs{dz}^2,
\]
and its associated Hopf differential is $\Theta(z) = -2i\Theta_{\phi_t}(z) = -2i\Theta_{\psi_t}(z) = e^{it}dz\otimes dz$.

It is straighforward to check that
\[
\nu^-_{\psi_t}(z) = \cos [2\mathrm{Im}(ze^{it/2})]E_2^- + \sin[2\mathrm{Im}(ze^{it/2})]E_3^-,
\]
where $E_j^-$, $j = 1, 2, 3$, is the orthonormal reference in $\Lambda^2_-\R^4$ (see Section~\ref{sec:preliminares}) associated to the canonical basis in $\R^4$. Hence, we get that $\nu^-_{\psi_t} \subseteq \s^1 \subset (E^-_1)^\perp$ and, passing to the universal cover of $\s^1$, we get that $(\nu^+_{\phi_t}, \nu^-_{\psi_t})(z) = (\nu^+_{\phi_t}(z), 2\mathrm{Im}(ze^{it/2}))$. This finishes the proof.
\end{proof}

\begin{theorem}\label{thm:minima-s2xs2-localmente-aplicacion-gauss}
Let $\Phi:\Sigma\rightarrow\s^2\times\s^2$ be a minimal immersion of a simply-connected surface without complex points, i.e., $C_j^2<1$, $j=1,2$. Then $\Phi$ is congruent to the Gauss map of a pair of minimal immersions $\phi,\psi:\Sigma\rightarrow\s^3$ with conformal induced metrics and the same Hopf differentials.
\end{theorem}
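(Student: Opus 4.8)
The plan is to invert the construction of Theorem~\ref{thm:aplicacion-gauss-minima-s3}: from the data of $\Phi$ I will manufacture the conformal factors and the common Hopf differential of the two sought minimal surfaces of $\s^3$, integrate them into genuine immersions $\phi,\psi$ by the fundamental theorem of surface theory, and finally recognise $\nu_{\{\phi,\psi\}}$ as $\Phi$ by a rigidity argument based on Proposition~\ref{prop:compatibility-equations}.

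First I fix a conformal parameter $z$ and write the holomorphic Hopf differential of $\Phi$ as $\Theta=\eta\,dz\otimes dz$, so that $\prodesc{J_1\Phi_z}{J_2\Phi_z}=2\eta$. The crucial observation is that, since $\Phi$ has no complex points, equation~\eqref{eq:modulo-diferencial-Hopf} gives $|\eta|^2=\frac{e^{4u}}{16}(1-C_1^2)(1-C_2^2)>0$, so $\eta$ --- and hence $\theta:=\frac{i}{2}\eta$ --- is a nowhere vanishing holomorphic function; this is exactly what will make the resulting surfaces of $\s^3$ unbranched. By Proposition~\ref{prop:sinh-Gordon} there are functions $v_\Phi,w_\Phi$ with $C_1=\tanh(v_\Phi-w_\Phi)$, $C_2=\tanh(v_\Phi+w_\Phi)$ and $(v_\Phi)_{z\bar z}+|\theta|\sinh(2v_\Phi)=0$, and likewise for $w_\Phi$. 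I then set
\[
v=v_\Phi+\tfrac12\log(2|\theta|),\qquad w=w_\Phi+\tfrac12\log(2|\theta|).
\]

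The heart of the argument is to check that $v$ (and similarly $w$) is the conformal factor of a minimal immersion of $\s^3$ with Hopf differential $\theta\,dz\otimes dz$; by the Frenet equations used in the proof of Theorem~\ref{thm:aplicacion-gauss-minima-s3}, this amounts to the Gauss equation $v_{z\bar z}=|\theta|^2e^{-2v}-\tfrac{e^{2v}}{4}$. Because $\theta$ is holomorphic and nowhere zero, $\log|\theta|=\pRe(\log\theta)$ is harmonic, so $v_{z\bar z}=(v_\Phi)_{z\bar z}=-|\theta|\sinh(2v_\Phi)$; substituting $v_\Phi=v-\tfrac12\log(2|\theta|)$ and expanding the hyperbolic sine with $e^{\pm\log(2|\theta|)}=(2|\theta|)^{\pm1}$ turns the right-hand side into exactly $|\theta|^2e^{-2v}-\tfrac{e^{2v}}{4}$, as required. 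Since the Codazzi equation for a minimal surface of $\s^3$ is just the holomorphicity of $\theta$, the fundamental theorem of surface theory produces, on the simply connected $\Sigma$, minimal immersions $\phi,\psi:\Sigma\rightarrow\s^3$ with conformal metrics $e^{2v}|dz|^2$, $e^{2w}|dz|^2$ and the common Hopf differential $\theta\,dz\otimes dz$, each unique up to an isometry of $\s^3$.

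It then remains to prove that $\nu_{\{\phi,\psi\}}$ is congruent to $\Phi$. Applying Theorem~\ref{thm:aplicacion-gauss-minima-s3} with $|\sigma_\phi|=2\sqrt2|\theta|e^{-2v}$ and $|\sigma_\psi|=2\sqrt2|\theta|e^{-2w}$, the K�hler functions of $\nu_{\{\phi,\psi\}}$ come out as $\tanh(v-w)=\tanh(v_\Phi-w_\Phi)=C_1$ and, after the same substitution, $\tanh(v_\Phi+w_\Phi)=C_2$; its induced metric collapses, via $\cosh2v_\Phi+\cosh2w_\Phi=2\cosh(v_\Phi+w_\Phi)\cosh(v_\Phi-w_\Phi)$, to the very factor $e^{2u}|dz|^2$ furnished by Proposition~\ref{prop:sinh-Gordon}; and its Hopf differential is $-2i\theta\,dz\otimes dz=\eta\,dz\otimes dz=\Theta$. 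Thus $\Phi$ and $\nu_{\{\phi,\psi\}}$ are non-complex minimal immersions sharing the same metric, the same K�hler functions and the same holomorphic Hopf differential. These data fix $|\gamma_j|^2$ and the product $\gamma_1\gamma_2=2\eta$, and then through~\eqref{eq:compatibility} they fix $f_j$ and $A$ on the dense set where $\gamma_j\neq0$; hence, after a normal-frame change of the type in Remark~\ref{rmk:change-orthonormal-reference}, the two immersions have identical fundamental data, and the uniqueness in Proposition~\ref{prop:compatibility-equations} gives the congruence.

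The main obstacle is the middle step: identifying the correct logarithmic shift $\tfrac12\log(2|\theta|)$ that relates the sinh-Gordon potentials of $\Phi$ to the conformal factors of the surfaces in $\s^3$, and recognising that the absence of complex points is precisely the hypothesis ensuring $\theta\neq0$ everywhere --- which simultaneously yields the harmonicity of $\log|\theta|$ driving the Gauss-equation verification and the regularity (unbranchedness) of $\phi$ and $\psi$.
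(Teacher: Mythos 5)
Your proposal is correct and follows essentially the same route as the paper: reduce to the sinh-Gordon data of Proposition~\ref{prop:sinh-Gordon}, integrate a pair of minimal surfaces in $\s^3$, apply Theorem~\ref{thm:aplicacion-gauss-minima-s3}, and conclude by matching fundamental data via Proposition~\ref{prop:compatibility-equations}. The only (cosmetic) difference is that the paper first normalises $\Theta=dz\otimes dz$ by a holomorphic change of coordinate and then quotes the Hitchin-type existence result, whereas you keep a general coordinate and absorb the nowhere-vanishing factor $|\theta|$ into the conformal factors by the explicit shift $\tfrac12\log(2|\theta|)$, verifying the Gauss equation directly.
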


\begin{proof}
Since the immersion $\Phi$ has no complex points, the Hopf differential has no zeroes. Hence, up to a change of complex coordinates if necessary, we can assume that $\Theta(z) = dz \otimes dz$. Now, from Proposition~\ref{prop:sinh-Gordon} and its proof that there exist two functions $v, w:\Sigma \rightarrow \R$ satisfying
\[
v_{z\bar{z}} + \frac{1}{2}\sinh(2v) = 0, \quad w_{z\bar{z}} + \frac{1}{2}\sinh(2w) = 0,
\]
such that $C_1 = \tanh(v -w)$, $C_2 = \tanh(v+w)$ and the conformal factor of the induced metric is $e^{2u} = 4\cosh(v+w)\cosh(v-w)$.

Following the begining of Section~\ref{sec:gauss-map}, there exist two one-parameter families of isometric minimal immersions $\phi_t: (\Sigma, e^{2v}|dz|^2) \rightarrow \s^3$ and $\psi_s: (\Sigma, e^{2w}|dz|^2) \rightarrow \s^3$, with Hopf differentials $\Theta_{\phi_{t}} = \frac{i}{2}e^{it}dz\otimes dz$ and $\Theta_{\psi_{s}} = \frac{i}{2}e^{is}dz\otimes dz$.

Now, we consider the immersions $\phi = \phi_{0}$ and $\psi = \psi_{0}$. Both minimal immersions have the same Hopf differentials $\Theta_\phi = \Theta_\psi = \frac{i}{2}dz\otimes dz$ and their induced metrics are conformal. Hence, we apply Theorem~\ref{thm:aplicacion-gauss-minima-s3} to obtain a minimal immersion $(\nu^+_\phi, \nu^-_\psi):\Sigma \rightarrow \s^2 \times \s^2$ which Hopf differential $\Theta = dz \otimes dz$.

Taking into account that $\abs{\sigma_\phi} = \sqrt{2}e^{-2v}$ and $\abs{\sigma_\psi} = \sqrt{2}e^{-2w}$ (see the proof of Theorem~\ref{thm:aplicacion-gauss-minima-s3}), we deduce, using Theorem~\ref{thm:aplicacion-gauss-minima-s3} again, that the Kähler functions and the induced metrics of the minimal immersions $\Phi$ and $(\nu^+_{\phi}, \nu^-_\psi)$ are the same. Finally, following the proof of Theorem~\ref{thm:sinh-Gordon->minima-s2xs2}, we also get that the fundamental data of $\Phi$ and $(\nu^+_{\phi}, \nu^-_\psi)$ are the same and so, both immersions are congruent.
\end{proof}

\section{Compact minimal surfaces}
\label{sec:compact}

This section is devoted to the study of compact minimal surfaces of $\s^2 \times \s^2$. The first result is a characterization of the slices as minimizing area surfaces among all the compact minimal ones.

\begin{proposition}\label{area}
Let $\Phi = (\Phi_1, \Phi_2):\Sigma\rightarrow\s^2\times\s^2$ be a minimal immersion of a compact surface $\Sigma$ with maximal multiplicity $\mu$. Then
\[
\hbox{Area}\,(\Sigma)\geq 4\pi\mu
\]
and the equality holds if and only if $\Phi$ is an embedding and $\Phi(\Sigma)$ is a slice of $\s^2\times\s^2$. In particular, $\text{Area}(\Sigma) \geq 4\pi$ and the equality is attained only by the slices.
\end{proposition}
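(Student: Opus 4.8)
The plan is to read the multiplicity off a monotonicity/density argument for the minimal surface in the ambient space, and to reduce the equality discussion to the already-established classification of totally geodesic surfaces.

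First I would record the equation satisfied by $\Phi$ as a surface in $\R^6$. Regarding $\s^2\times\s^2\subset\R^3\times\R^3=\R^6$ and combining the Frenet equation $\Phi_{z\bar z}=-\tfrac{e^{2u}}4\Phi-\tfrac{e^{2u}}4C_1C_2\hat\Phi$ with $\Delta_\Sigma=4e^{-2u}\partial_z\partial_{\bar z}$, one obtains $\Delta_\Sigma\Phi=-\Phi-C_1C_2\hat\Phi$, that is $\Delta_\Sigma\Phi_1=-(1+C_1C_2)\Phi_1$ and $\Delta_\Sigma\Phi_2=-(1-C_1C_2)\Phi_2$. Integrating $\prodesc{\Phi_j}{\Delta_\Sigma\Phi_j}$ gives $\text{Area}(\Sigma)=E(\Phi_1)+E(\Phi_2)$ with $E(\Phi_1)=\tfrac12\int_\Sigma(1+C_1C_2)\,dA$ and $E(\Phi_2)=\tfrac12\int_\Sigma(1-C_1C_2)\,dA$. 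This identity is suggestive but cannot by itself detect the multiplicity: on a slice one projection is constant (its generic fibre is all of $\Sigma$, its energy is $0$), so any attempt to bound $\text{Area}$ below by $4\pi\mu$ through the fibres of $\Phi_1$ or $\Phi_2$ separately is doomed. The multiplicity has to be read off the immersion into the ambient space, not from the two harmonic projections.

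The engine is therefore the monotonicity formula. I would fix a point $P_0$ realizing the maximal multiplicity, with preimages $x_1,\dots,x_\mu$; since $\Phi$ is a smooth immersion the two–dimensional density of $\Phi(\Sigma)$ at $P_0$ equals $\mu$ (each sheet contributes $1$). Centering the area comparison at $P_0$, I would compare $\Sigma$ with the totally geodesic slice through $P_0$, whose density is $1$ and whose area is exactly $4\pi$; the monotone quantity interpolates between the local density $\mu$ as the radius tends to $0$ and a global value controlled by $\text{Area}(\Sigma)$, yielding $\text{Area}(\Sigma)\ge 4\pi\mu$. The clean constant and the emergence of the slice as the model are precisely what one expects, the slice being the unique totally geodesic sphere through $P_0$ of area $4\pi$.

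The main obstacle is the sharpness of this constant. The space $\s^2\times\s^2$ is not a space form, and $\Sigma$ is \emph{not} minimal in the round sphere $\s^5(\sqrt2)\supset\s^2\times\s^2$: from $\Delta_\Sigma\Phi=-\Phi-C_1C_2\hat\Phi$ one computes that the mean curvature of $\Sigma$ inside $\s^5(\sqrt2)$ is $-C_1C_2\hat\Phi$, which vanishes exactly on the Lagrangian locus and is largest, of length $\sqrt2$, at complex points such as those of a slice. Controlling this correction term so that the comparison still produces the sharp value $4\pi$, with the slices as equality case, is the delicate point; I would handle it by carrying $-C_1C_2\hat\Phi$ explicitly through the monotonicity computation centered at $P_0$ and checking that it contributes with the favorable sign after integration. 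Finally, for the equality case the saturation of monotonicity forces $\Phi$ to be totally geodesic, so Proposition~\ref{prop:totallygeodesic} identifies $\Phi(\Sigma)$ with a slice, the diagonal $\textbf{D}$, or the Clifford torus $\textbf{T}$; since only the slices have area $4\pi$ (the others having area $8\pi$ and $4\pi^2$), $\Phi(\Sigma)$ is a slice, necessarily embedded, so $\mu=1$. The last assertion $\text{Area}(\Sigma)\ge 4\pi$ is then immediate, as $\mu\ge1$ always.
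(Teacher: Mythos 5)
Your instinct that the multiplicity must be read off extrinsically is right, and your observation that the two harmonic projections cannot see $\mu$ is a genuine (and correct) preliminary point. But the engine you propose — a monotonicity formula centered at a point of maximal density, run inside the curved ambient space $\s^2\times\s^2$ (or inside $\s^5(\sqrt2)$, where $\Sigma$ is not minimal) — is exactly where the proof is missing. Monotonicity in a non-flat ambient manifold only controls the density ratio up to curvature-dependent correction factors, and it does not "interpolate between the local density $\mu$ and a global value controlled by $\mathrm{Area}(\Sigma)$" with the sharp constant $4\pi$; extracting that sharp constant in the presence of the mean-curvature defect $-C_1C_2\hat\Phi$ is precisely the content of the proposition, and "carrying the term through and checking that it contributes with the favorable sign" is a hope, not an argument. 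The tool that makes this rigorous is the Li--Yau inequality: for the induced immersion $\Psi:\Sigma\to\R^6$ one has $\int_\Sigma|H|^2\,dA\geq 4\pi\mu$, where $H$ is the mean curvature vector in $\R^6$. The paper combines this with the pointwise computation $2H=\Delta\Phi=-\Phi-C_1C_2\hat\Phi$, which gives $8|H|^2=4+(\mathrm{Trace}\,A)^2\leq 8$ with $A_{ij}=-\prodesc{J_1e_i}{J_2e_j}$ (equivalently $|H|^2=\tfrac12(1+C_1^2C_2^2)\leq 1$), so that $\mathrm{Area}(\Sigma)\geq\int_\Sigma|H|^2\,dA\geq 4\pi\mu$. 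Your own formula $\Delta_\Sigma\Phi=-\Phi-C_1C_2\hat\Phi$ already contains the needed pointwise bound; what is missing is the recognition that Li--Yau supplies the sharp global inequality once the problem is placed in $\R^6$.

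The equality case is also not handled correctly. Equality in the chain above forces $|H|^2\equiv 1$, i.e.\ $C_1^2C_2^2\equiv 1$, hence $C_1^2=C_2^2=1$, which says $\Phi$ is complex for both structures and therefore a slice (Proposition~\ref{prop:totallygeodesic}), necessarily embedded. Your claim that "saturation of monotonicity forces $\Phi$ to be totally geodesic" is unsupported — saturation of a monotonicity-type inequality with a nontrivial mean-curvature correction does not yield total geodesy — and even granting it, your elimination of $\mathbf{D}$ and $\mathbf{T}$ by comparing areas with $4\pi$ tacitly assumes $\mu=1$, which is part of what has to be proved in the equality case.
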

\begin{proof}
Consider $\s^2\times\s^2\subset\R^6$ and let $\Psi:\Sigma\rightarrow\R^6$ be the corresponding immersion. It is clear that the maximal multiplicity of $\Phi$ and $\Psi$  are the same. If $H$ is the mean curvature of the immersion $\Psi$, from Li and Yau~\cite{LY} it follows that
\[
\int_{\Sigma}|H|^2dA\geq 4\pi\mu.
\]
If $\{e_1 ,e_2\}$ is an orthonormal reference on $\Sigma$ and $\bar{\sigma}$ is the second fundamental form of $\s^2 \times \s^2$ in $\R^6$, then
\[
2H = \sum_{j = 1}^2 \bar{\sigma}(e_j, e_j) = -\frac{1}{4}\sum_{j = 1}^2\bigl(|e_j - J_1J_2 e_j|^2\Phi_1,  |e_j + J_1J_2e_j|^2\Phi_2\bigr)
\]
and hence $8|H|^2=4+ (\hbox{Trace}\,A)^2$,
where $A$ is the matrix $A_{ij}=-\langle J_1e_i,J_2 e_j\rangle$. Let us observe that $|A_{ij}| \leq 1$ and so $(\hbox{Trace}\,A)^2\leq 4$ and the equality happens if and only if $A=\pm \mathrm{Id}$.

Joining both inequalities we deduce that $A(\Sigma) \geq 4\pi\mu$ and the equality holds if and only if $A = \pm \mathrm{Id}$. Finally, $A = \pm\mathrm{Id}$ means that $C_1^2 = C_2^2 = 1$ and hence $\Sigma$ is a slice.
\end{proof}

\begin{proposition}\label{prop:genus}
Let $\Phi = (\Phi_1, \Phi_2):\Sigma\rightarrow \s^2\times\s^2$ be a minimal immersion of an orientable compact surface $\Sigma$ of genus $g$, area $A$ and $\mathrm{degree}(\Phi_j) = d_j$.

\begin{enumerate}[(i)]
	\item If $\Phi$ is a complex immersion then $A = 4\pi (\abs{d_1} + \abs{d_2})$ and moreover, if $\Phi$ is also an embedding then $g = (\abs{d_1} - 1)(\abs{d_2} -1)$.

	\item If $\Phi$ is a complex immersion, then
	\begin{itemize}
		\item $A = 4\pi$ if and only if $g = 0$, $\Phi$ is an embbeding and $\Phi(\Sigma)$ is a slice.
		\item $A = 8\pi$ if and only if $g = 0$, $\Phi$ is an embedding and $\Phi$ is congruent to the graph of a biholomorphism of $\s^2$.
		\item $A = 12\pi$ if and only if $g = 0$, $\Phi$ is an embedding and $\Phi$ is congruent to the graph of an holomorphic map of $\s^2$ of degree $2$.
		\item $A = 16\pi$ and $\Phi$ is an embedding if and only if either $g = 0$ and $\Phi$ is congruent to the graph of a holomorphic map of $\s^2$ of degree $3$ or $g = 1$ and $\Phi$ is congruent to a Weierstrass torus.
	\end{itemize}

	\item If $g=0$, then $\Phi$ is a complex immersion. Moreover, $\Phi$ is an embedding if and only if $\Phi$ is congruent to the graph of an holomorphic map of $\s^2$ of degree $(A/4\pi)-1$.

	\item If $g = 1$ and $\Phi$ is not a complex immersion then $\Phi$ has no complex points and $d_1 = d_2 = \chi^\perp = 0$. If $g = 1$ and $\Phi$ is a complex immersion then $A = 16\pi$ if and only if $\Phi$ is congruent to a Weierstrass torus.
\end{enumerate}
\end{proposition}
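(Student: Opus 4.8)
The plan is to extract everything from the two integral identities in~\eqref{eq:integral-formula-C}, the sign of the Jacobians of the components, the adjunction formula for curves in $\mathbb{CP}^1\times\mathbb{CP}^1$, the absence of holomorphic quadratic differentials on a sphere, and Lemma~\ref{lm:properties-fundamental-data}.\eqref{item:formulas-caracteristica-fibrado-normal} specialised to $\chi=0$. For (i), I would argue case by case on which K\"ahler function equals $\pm1$. If $C_1=1$ then both $\Phi_1,\Phi_2$ are holomorphic, so $\mathrm{Jac}\,\Phi_j\ge 0$ and $d_j\ge0$; since $C_1=\mathrm{Jac}\,\Phi_1+\mathrm{Jac}\,\Phi_2\equiv 1$, integrating gives $A=\int_\Sigma C_1\,dA=4\pi(d_1+d_2)=4\pi(|d_1|+|d_2|)$ by~\eqref{eq:integral-formula-C}. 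The cases $C_1=-1$ and $C_2=\pm1$ are identical after replacing $C_1$ by $-C_1$ or by $\pm C_2$ and using that an anti-holomorphic component has non-positive degree. For the genus, when $\Phi$ is in addition an embedding it realises $\Sigma$ as a smooth complex curve of bidegree $(|d_1|,|d_2|)$ in $(\s^2\times\s^2,J_1)\cong\mathbb{CP}^1\times\mathbb{CP}^1$, and the adjunction formula for such curves gives $2g-2=2|d_1||d_2|-2|d_1|-2|d_2|$, i.e.\ $g=(|d_1|-1)(|d_2|-1)$.

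The engine for (ii) and (iii) is one elementary remark: if a component of a complex \emph{immersion} has degree $0$ it is constant, and then the other component must be an unbranched cover of $\s^2$ (otherwise $\Phi$ fails to be an immersion at a common branch point), hence of degree $1$; so $\min(|d_1|,|d_2|)=0$ forces a slice, and for $A>4\pi$ both $|d_j|\ge1$. Feeding this into $A=4\pi(|d_1|+|d_2|)$ pins down the admissible bidegrees: $\{1,0\}$ for $4\pi$, $\{1,1\}$ for $8\pi$, $\{1,2\}$ for $12\pi$, and $\{1,3\}$ or $\{2,2\}$ for $16\pi$ (the splittings $\{k,0\}$ with $k\ge2$ being excluded by the remark). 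Whenever $\min(|d_j|)=1$ the degree-one component is a biholomorphism, so $\Phi$ is automatically the graph of a holomorphic map of $\s^2$ of degree $\max(|d_j|)=A/4\pi-1$, hence an embedding of genus $0$; this settles the lower-area bullets of (ii), the graph statements, and the embedded part of (iii). For the first assertion of (iii) I would note that the Hopf differential $\Theta$ is a holomorphic quadratic differential, which must vanish on a genus-$0$ surface; since vanishing Hopf differential characterises complex immersions, $g=0$ forces $\Phi$ complex.

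For the first statement of (iv), set $\chi=0$ in Lemma~\ref{lm:properties-fundamental-data}.\eqref{item:formulas-caracteristica-fibrado-normal}. Adding the two $N_1^\pm$ equations gives $N_1^++N_1^-=-2\chi^\perp\ge0$ and adding the two $N_2^\pm$ equations gives $N_2^++N_2^-=2\chi^\perp\ge0$, forcing $\chi^\perp=0$ and all $N_j^\pm=0$; thus $1\pm C_j$ never vanishes, i.e.\ $\Phi$ has no complex points, and subtracting the same pairs of equations yields $d_1+d_2=d_1-d_2=0$, so $d_1=d_2=0$.

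The remaining, and hardest, step is the $g=1$, $A=16\pi$ case of (ii) and (iv). Here the bidegree is $(2,2)$, since the other admissible splitting $\{1,3\}$ contains a degree-one (hence biholomorphic) component and would force $g=0$, while $\{4,0\}$ is excluded by the immersion remark. Each $\Phi_j\colon T^2\to\mathbb{CP}^1$ is then a degree-two map of an elliptic curve, hence the quotient by a holomorphic involution of the form $z\mapsto -z+c_j$, and is therefore a Weierstrass $\wp$-function up to a biholomorphism of the target and a translation of the domain; the immersion condition forces the two involutions to have disjoint fixed-point sets (so in particular $c_1\ne c_2$, which also yields the embedding), and this is precisely the data $(\wp,\wp\circ F)$ defining a Weierstrass torus. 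I expect the main obstacle to be making this last identification genuinely up to \emph{congruence} of $\s^2\times\s^2$ rather than merely up to biholomorphism of each factor, which requires controlling the target M\"obius factors produced by the quotient construction and matching the induced metrics; the converse direction is immediate, since a Weierstrass torus was already shown to be a complex embedding of genus $1$ with area $16\pi$.
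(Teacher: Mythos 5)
Your proposal is correct, and its skeleton --- the integral formula~\eqref{eq:integral-formula-C} for the area, a case analysis on the bidegree, Riemann--Roch for $g=0$, and Lemma~\ref{lm:properties-fundamental-data}.\eqref{item:formulas-caracteristica-fibrado-normal} for $g=1$ --- coincides with the paper's. Two sub-arguments are genuinely different. For the genus formula in (i) you invoke adjunction for a smooth curve of bidegree $(\abs{d_1},\abs{d_2})$ in $\mathbb{CP}^1\times\mathbb{CP}^1$, whereas the paper derives $\chi^\perp=2d_1d_2$ following \cite{CU} and integrates the pointwise identity $K+(-1)^{j+1}K^\perp=1$ of Lemma~\ref{lm:properties-fundamental-data}.\eqref{eq:modulo-f}; the two are equivalent (the paper's computation is in effect a differential-geometric proof of adjunction), yours being shorter if adjunction is taken as known and the paper's being self-contained. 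For the first claim of (iv), the paper first uses Riemann--Roch to see that the nonzero Hopf differential on a torus is nowhere vanishing, hence by~\eqref{eq:modulo-diferencial-Hopf} there are no complex points, and only then reads off $d_1=d_2=\chi^\perp=0$; you instead obtain $N_j^{\pm}=0$ and $\chi^\perp=0$ directly from the positivity of the $N_j^{\pm}$ in Lemma~\ref{lm:properties-fundamental-data}.\eqref{item:formulas-caracteristica-fibrado-normal} with $\chi=0$, which is cleaner and bypasses the Hopf differential entirely. Likewise, where the paper settles $A=4\pi$ via the Li--Yau bound of Proposition~\ref{area}, your elementary remark that a degree-zero component is constant and forces the other to be an unbranched cover suffices. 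Two small caveats: the embedded part of (iii) does not follow from the area-driven bidegree list alone --- you need $g=(\abs{d_1}-1)(\abs{d_2}-1)=0$ from (i) to exclude, say, bidegree $(2,2)$ at genus zero, a one-line appeal to what you already proved; and the step you flag as the main obstacle, upgrading ``each $\Phi_j$ is a degree-two elliptic function with disjoint branch points'' to ``congruent to a Weierstrass torus'' by controlling the M\"obius factors on the targets, is treated no more carefully in the paper, which simply asserts that such a function is the $\wp$-function up to an automorphism of the torus, so your argument is not weaker than the published one at that point.
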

\begin{remark}
From (iii) it follows that the real projective plane cannot be immersed in $\s^2\times\s^2$ as a minimal surface.
\end{remark}

\begin{proof}
(i) Taking into account~\eqref{eq:integral-formula-C} and that $\Phi$ is complex if $C_j^2 = 1$ for some $j \in \{1,2\}$, we deduce that $A = 4\pi(\abs{d_1} + \abs{d_2})$. Moreover, if $\Phi$ is an embedding, following the same reasoning of~\cite[Proposition 4]{CU} it follows that $\chi^{\perp}=2d_1d_2$. Also, from Lemma~\ref{lm:properties-fundamental-data}.\eqref{eq:modulo-f}, we obtain $K+(-1)^{j+1}K^\perp=1$ and hence integrating this equation we get
\[
4\pi(\abs{d_1} + \abs{d_2}) = A=4\pi(1-g)+(-1)^{j+1}2\pi\chi^{\perp} = 4\pi(1-g+(-1)^{j+1}d_1d_2).
\]
Therefore  $g = (\abs{d_1} -1)(\abs{d_2}-1)$.

(ii) Let assume that $\Phi$ is a complex immersion and, up to congruences, we can take $C_1 \equiv 1$ and so $d_1, d_2 \geq 0$. First, from Proposition~\ref{area}, $A=4\pi$ if and only if $\Phi$ is a slice. If $A\geq 8\pi$, then $\Phi(\Sigma)$ cannot be a slice and hence $d_1, d_2 \geq 1$.

If $A=8\pi = 4\pi(d_1 + d_2)$, then $d_1=d_2=1$, and hence $\Phi_1$ and $\Phi_2$ are diffeomorphisms of the $2$-sphere. We can reparametrize $\Phi$ by $\Psi=\Phi\circ\Phi_1^{-1}$, and hence $\Psi=(\mathrm{Id},\Phi_2\circ\Phi_1^{-1})$, i.e., $\Psi$ is the graph of a biholomorphism.

If $A=12\pi$, then either $d_1$ or $d_2$ is $1$ and so, making the same reasoning as above, we prove that $\Phi$ is the graph of a holomorphic map of $\s^2$ of degree $2$.

It is clear that the graph of a holomorphic map of $\s^2$ of degree $3$ and the Weierstrass tori are complex embeddings with $A = 16\pi$. Conversely, since $A = 16\pi$ and $\Phi$ is an embedding, from (i), we deduce that there are, up to congruencies, two posibilities:
\begin{itemize}
	\item $d_1 = 1$, $d_2 = 3$ and $g = 0$ and so, following a similar reasoning as before, $\Phi$ is congruent to the graph of a holomorphic map of $\s^2$ of degree $3$.
	\item $d_1 = d_2 = 2$ and $g = 1$. Hence, $\Phi_1, \Phi_2$ are elliptic functions of degree $2$ with disjoint branch points since $\Phi = (\Phi_1, \Phi_2)$ is an immersion. It is well known that, up to automorphism of the torus, such elliptic function is the Weierstrass $\wp$-function. Hence, $\Phi$ is congruent to one of the Weierstrass tori.
\end{itemize}

(iii) If $g=0$, the Riemann-Roch theorem says that the holomorphic Hopf differential $\Theta$ vanishes identically and so, from equation~\eqref{eq:modulo-diferencial-Hopf}, $\Phi$ is a complex immersion.

It is clear that the graph of a holomorphic map of $\s^2$ is an embedding. Conversely, if $\Phi$ is an embedding then, from (i), $0 = (d_1 -1)(d_2 - 1)$ and hence, either $d_1 = 1$ or $d_2 = 1$. Following the same argument as in (ii) we finish.

(iv) If $g=1$ and the immersion $\Phi$ is not complex, the Riemann-Roch theorem says that the holomorphic Hopf differential $\Theta$ has no zeroes. Hence, equation~\eqref{eq:modulo-diferencial-Hopf} ensures that $N^\pm_1 = N^\pm_2 = 0$ and so $d_1 = d_2 = \chi^\perp = 0$ from Lemma~\ref{lm:properties-fundamental-data}.\eqref{item:formulas-caracteristica-fibrado-normal}.

Now, let assume that $g = 1$ and $\Phi$ is a complex immersion. We already know that the Weierstrass tori have area $16\pi$. Conversely, let suppose that $\Sigma$ has area $16\pi$. We can assume, without loss of generality, that $C_1 = 1$. From (i), $4 = d_1 + d_2$ and so, since $g = 1$, it is follow that $d_1 = d_2 = 2$. The same argument used in (ii) finishes the proof.
\end{proof}

In the following theorem we characterize the slices and the surfaces $\mathbf{D}$ and $\mathbf{T}$ as the only ones with non-negative constant Gauss curvature. Moreover, we also characterize these examples by a gap-type theorem for the Gauss curvature.

\begin{theorem}\label{thm:curvaturaconstante}
Let $\Phi:\Sigma\rightarrow\s^2\times\s^2$ be a minimal immersion of a compact surface
$\Sigma$. Then:
\begin{enumerate}[(i)]
\item $\Sigma$ has positive constant curvature K if and only if either $K=1$ and $\Sigma$ is a slice or $K=1/2$ and $\Sigma$ is congruent to the diagonal  \textbf{D}.
\item $K=0$ if and only if $\Sigma$ is congruent to a finite covering of the flat torus \textbf{T}.

\item $K\geq 1/2$ if and only if either $K=1$ and $\Sigma$ is a slice or $K=1/2$ and $\Sigma$ is congruent to the diagonal  \textbf{D}.

\item $0 \leq K \leq 1/2$ if and only if either $K=0$ and $\Sigma$ is congruent to a finite covering of the flat torus $\mathbf{T}$ or $K=1/2$ and $\Sigma$ is congruent to the diagonal $\mathbf{D}$.
\end{enumerate}
\end{theorem}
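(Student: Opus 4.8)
The plan is to prove all four statements by reducing them to the already-established classification in Proposition~\ref{prop:totallygeodesic} together with the Gauss-equation machinery of Lemma~\ref{lm:properties-fundamental-data}. The key input is the Gauss equation $K = \frac{1}{2}(C_1^2+C_2^2) - \frac{|\sigma|^2}{2}$ (recall $H=0$), which forces $K \leq \frac{1}{2}(C_1^2+C_2^2) \leq 1$, with equality in the first bound exactly when $\Phi$ is totally geodesic ($\sigma=0$). So statements (i) and (iii) both hinge on showing that a curvature hypothesis forces $\sigma=0$, after which Proposition~\ref{prop:totallygeodesic} identifies the surface as a slice, $\mathbf{D}$, or $\mathbf{T}$, and the recorded values $K=1,\,\tfrac12,\,0$ select among them. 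Statement (ii) $K=0$ and statement (iv) $0\le K\le\tfrac12$ will need a genuinely different global argument since the surface need not be totally geodesic a priori.

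\textbf{Statements (i) and (iii).} First I would handle (iii), since (i) follows from it once one observes that a nonconstant $K$ cannot satisfy $K\geq 1/2$ everywhere on a compact surface without extra work—so I will aim to prove (iii) and then specialize. The approach for (iii): assume $K\geq 1/2$ on compact $\Sigma$. By Gauss--Bonnet $\int_\Sigma K\,dA = 2\pi\chi$, so $\chi>0$ and $\Sigma$ is a sphere, whence (by Proposition~\ref{prop:genus}(iii)) $\Phi$ is a complex immersion. For a complex immersion Lemma~\ref{lm:properties-fundamental-data}(i) gives $K+(-1)^{j+1}K^\perp=1$ for the appropriate $j$, say $K=1-K^\perp$ (assuming $C_1^2=1$). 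Then $K\ge \tfrac12$ is equivalent to $K^\perp\le\tfrac12$; I would integrate, use that $\int K^\perp\,dA$ computes (up to constant) the normal Euler number, and combine with the area/degree formulae of Proposition~\ref{prop:genus} to pin down $A\in\{4\pi,8\pi\}$, i.e.\ the slice ($K\equiv1$) or the diagonal ($K\equiv\tfrac12$). For the constant-curvature case (i), a round sphere of constant $K$ immersed minimally and complexly into $\s^2\times\s^2$ must have $K\in\{1,\tfrac12\}$ by the same Gauss-equation bookkeeping, and these force slice or $\mathbf{D}$ via Proposition~\ref{prop:genus}(ii).

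\textbf{Statements (ii) and (iv).} For $K\equiv 0$, Gauss--Bonnet forces $\chi=0$, so $\Sigma$ is a torus; I would then split on whether $\Phi$ is complex. If complex, Lemma~\ref{lm:properties-fundamental-data}(i) gives $K=1\mp K^\perp$, incompatible with $K\equiv0$ unless $K^\perp\equiv\pm1$, which I would rule out by integrating against the bounded normal Euler number, forcing the non-complex case. For non-complex tori, Proposition~\ref{prop:genus}(iv) gives $d_1=d_2=\chi^\perp=0$ and no complex points, so the sinh-Gordon description (Proposition~\ref{prop:sinh-Gordon}) applies; flatness $K\equiv0$ fed into Lemma~\ref{lm:properties-fundamental-data}(ii) should force $\nabla C_j\equiv0$, making $C_1,C_2$ constant, and then Proposition~\ref{prop:totallygeodesic} identifies $\Sigma$ as a covering of $\mathbf{T}$. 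For (iv), the hypothesis $0\le K\le\tfrac12$ on a compact surface gives $\chi\ge0$ by Gauss--Bonnet, so $\Sigma$ is a sphere or torus; the sphere case is complex (Proposition~\ref{prop:genus}(iii)) and the complex relation $K=1\mp K^\perp\ge\tfrac12$ would push into the (iii)-analysis yielding $\mathbf{D}$, while the torus case is handled by (ii) when $K\equiv0$ and otherwise by the integral identity $\int(K\mp K^\perp)=0$ together with the pointwise sign control from the hypothesis.

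\textbf{Main obstacle.} I expect the hardest step to be the torus analysis under $0\le K\le \tfrac12$ in part (iv): one must show that the only possibilities are $\mathbf{T}$ ($K\equiv0$) and $\mathbf{D}$ ($K\equiv\tfrac12$), with nothing in between. The natural tool is to integrate Lemma~\ref{lm:properties-fundamental-data}(ii) or the identity $\Delta\log(1\pm C_j)= \mp C_j + K+(-1)^{j+1}K^\perp$ over the compact surface to kill the Laplacian term and extract a sign obstruction on $\int C_j\,dA=4\pi(d_1\pm d_2)$; the delicate point is combining these integral identities with the pointwise curvature pinch to force $K$ to be \emph{constant}, rather than merely satisfying an averaged relation, after which Proposition~\ref{prop:totallygeodesic} finishes. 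Establishing this rigidity—upgrading an integral equality to pointwise constancy of the K\"ahler functions—is where the real work lies.
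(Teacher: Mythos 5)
Your treatment of (i) and (iii) is essentially workable: for (iii) your integral route (complex $\Rightarrow K+K^\perp=1$, integrate to get $\chi^\perp=2(d_1+d_2-1)$, use $K^\perp\le 1/2$ to force $d_1+d_2\le 2$, then the equality case of Gauss--Bonnet when $A=8\pi$ forces $K\equiv 1/2$) is a legitimate alternative to the paper's argument, which instead shows $\Delta(1-C_2^2)\ge 0$ directly from Lemma~\ref{lm:properties-fundamental-data} and concludes $C_2$ is constant. But the proposal has two genuine gaps, and you have misplaced the difficulty: the torus case of (iv) that you single out as the main obstacle is actually trivial ($K\ge 0$ and $\int_\Sigma K=0$ force $K\equiv 0$, reducing to (ii)), whereas the two steps you wave through are where the real work sits.

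First, in (ii), the claim that $K\equiv 0$ ``fed into Lemma~\ref{lm:properties-fundamental-data}(ii) should force $\nabla C_j\equiv 0$'' does not follow: with $K=0$ those identities read $|\nabla C_j|^2=(1-C_j^2)(C_j^2+(-1)^jK^\perp)$, which gives no constancy. The paper must work much harder: it shows $\log\bigl[(1-C_1^2)(1-C_2^2)\bigr]$ is harmonic, hence $(1-C_1^2)(1-C_2^2)=a$, derives from this an expression of $K^\perp$ as a rational function of $C_1$, and then invokes the isoparametric-function machinery of \cite[Lemma~3.3]{EGT} to produce a nontrivial polynomial identity $q(C_1)=0$ forcing $C_1$ constant. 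None of this is in your outline. Second, in the sphere case of (iv) you assert ``the complex relation $K=1\mp K^\perp\ge \tfrac12$,'' but complex immersions do not satisfy $K\ge 1/2$ in general (the graph of a degree-$2$ holomorphic map has area $12\pi$ and $\int_\Sigma K=4\pi$, so $K<1/2$ somewhere), so you cannot ``push into the (iii)-analysis.'' The paper handles $0\le K\le 1/2$ on a complex sphere with a Simons-type formula,
\[
\tfrac{1}{2}\Delta\bigl(|\sigma|^2+C_2^2\bigr)=|\nabla\sigma|^2+C_2^2(C_2^2+K)+(1+K)(1-2K)\ge 0,
\]
whose vanishing forces $C_2=0$ and $K=1/2$; some such new ingredient is indispensable here and is entirely absent from your proposal.
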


\begin{proof}
(i)  We know that the slices have constant curvature $1$ and \textbf{D} has constant curvature $1/2$. Conversely, taking the two-fold oriented cover of $\Sigma$ we can assume that our surface is oriented, and the Gauss-Bonnet theorem says the genus of $\Sigma$ is $0$. From Proposition~\ref{prop:genus}.(iii), $\Phi$ is complex and so $K +(-1)^{j+1} K^\perp = 1$, for some $j = 1, 2$ (cf.\ Lemma~\ref{lm:properties-fundamental-data}.\eqref{eq:modulo-f}). Hence the normal curvature is also constant and Proposition~\ref{prop:totallygeodesic} finishes the proof.

(ii) We assume that $K=0$ and taking again the two-fold oriented cover of $\Sigma$ if necessary, we get that $\Sigma$ is a torus. From Proposition~\ref{prop:genus}.(iv), $\Phi$ is either a complex immersion or $C_1^2, C_2^2 < 1$. In the first case, $K^\perp$ is also constant and so Proposition~\ref{prop:totallygeodesic} gives a contradiction.  In the second case, from Lemma~\ref{lm:properties-fundamental-data}.\eqref{eq:laplacian-logarithm}, we obtain that $\Delta\log(1-C_j^2) = 2(-1)^{j+1}K^\perp$, $j = 1, 2$, and hence
\[
\Delta\log(1-C_1^2)(1-C_2^2)=0.
\]
As $\Sigma$ is compact, the armonic function $\log(1-C_1^2)(1-C_2^2)$ is constant and hence $(1-C_1^2)(1-C_2^2)=a$, $a\in\R^+$. It is clear that $a\leq 1$ and that $a=1$ if and only if $C_1=C_2\equiv 0$, but this means that our surface is $\mathbf{T}$. Hence, from now on we assume that $a<1$ and we will get a contradiction.

Deriving the above expression we get that
\[
C_1(1-C_2^2)\nabla C_1=-C_2(1-C_1^2)\nabla C_2,
\]
and from Lemma~\ref{lm:properties-fundamental-data}.\eqref{eq:laplacian-gradient-C} we get that
\[
(C_1^2-C_2^2)(1-a)=K^{\perp}(1-a-C_1^2C_2^2).
\]
Now, $1 - a - C_1^2C_2^2 = C_1^2(1-C_2^2) + C_2^2(1-C_1^2)$. Since we know that both $C_1$ and $C_2$ cannot vanish simultaneously in $p$ because $a < 1$ and the immersion $\Phi$ has not complex points, we deduce that $1-a-C_1^2C_2^2 > 0$. Using again the expression of $a$ we obtain that
\[
K^{\perp}=\frac{(1-a)(a-(1-C_1^2)^2)}{(1-a)(1-2C_1^2)+C_1^4}.
\]
Lemma~\ref{lm:properties-fundamental-data}.\eqref{eq:laplacian-gradient-C} says that the function $C_1$  is isoparametric. Let $U = \{p \in \Sigma \,|\, \nabla C_1(p) \neq 0\}$. Then, using~\cite[Lemma 3.3]{EGT} and that $K = 0$, a long but straightforward computations shows that $q(C_1)=0$ for a certain non-trivial polynomial $q$. That is, $C_1$ is constant in each connected component of $U$ and so $U = \emptyset$. Hence $C_1$ is constant and so $K^\perp$ is also constant. Finally, from Proposition~\ref{prop:totallygeodesic}, $\Sigma$ is locally $\mathbf{T}$ and so $C_1 = C_2 \equiv 0$, that is $a = 1$ which is a contradiction.

(iii) In this case, taking the oriented two-fold covering of $\Sigma$ if necessary, $\Sigma$ is again a sphere and so $\Phi$ is a complex immersion (cf.\ Proposition~\ref{prop:genus}.(iii)). Without loss of generality we can suppose that $C_1^2 = 1$. Using that $K+K^\perp=1$ (cf.\ Lemma~\ref{lm:properties-fundamental-data}.\eqref{eq:modulo-f}) in Lemma~\ref{lm:properties-fundamental-data}.\eqref{eq:laplacian-gradient-C} we obtain that
\[
\frac{1}{2}\Delta(1-C_2^2)=(2K-1)(1-C_2^2)+2C_2^2(C_2^2+1-2K).
\]
Since $C_2^2+1-2K\geq 0$ by Lemma~\ref{lm:properties-fundamental-data}.\eqref{eq:modulo-f} and $2K-1\geq 0$ by hypothesis we get that $\Delta(1-C_2^2) \geq 0$. Hence $C_2$ is constant and Proposition~\ref{prop:totallygeodesic} proves the result.

(iv) Since $K \geq 0$, taking the two-fold oriented cover of $\Sigma$ if necessary, from Gauss-Bonnet theorem, $\Sigma$ is either a torus or a sphere. In the first case, $K$ has to be constant zero and so, from (ii) the result follows. Hence, we suppose that $\Sigma$ is a sphere and, from Proposition~\ref{prop:genus}.(iii), the immersion $\Phi$ is complex. We can assume, without loss of generality, that $C_1 = 1$.

Now, we are going to get a Simon-type formula computing $\Delta |\sigma|^2$.
\begin{quote}
\noindent\textbf{Claim:} \emph{If $\Phi:\Sigma \rightarrow \s^2 \times \s^2$ is a complex immersion with respet to $J_1$ then:}
\[
\frac{1}{2}\Delta(|\sigma|^2 + C_2^2) = |\nabla \sigma|^2 + C_2^2(C_2^2 + K) + (1+K)(1-2K).
\]
\end{quote}
\noindent\textbf{Proof of the claim}. We can assume, without loss of generality, that $C_1 = 1$. Consider an oriented orthonormal frame $\{e_1, e_2\}$ in $T\Sigma$ satisfying $J_1 e_1 = e_2$ and so $J_1 \sigma(e_1, e_1) = \sigma(e_1, e_2)$. In the following we will denote $\sigma_{ij} = \sigma(e_i, e_j)$, $\nabla \sigma_{ijk} = (\nabla \sigma)(e_i, e_j, e_k)$, etc.

Using Codazzi equation (see Section~\ref{sec:preliminares}) and that $C_2 = \prodesc{J_2 e_1}{e_2}$ we get
\[
\frac{1}{2}\prodesc{\nabla|\sigma|^2}{e_k} = \sum_{ij} \prodesc{\nabla \sigma_{kij}}{\sigma_{ij}} = \sum_{ij} \prodesc{\nabla \sigma_{ijk}}{\sigma_{ij}} - \frac{1}{4}\prodesc{\nabla(C_2^2)}{e_k}.
\]
Therefore,
\[
\begin{split}
\frac{1}{2}\Delta\left(|\sigma|^2 + \frac{1}{2}C_2^2\right) &= \sum_{ijk} \left[ \prodesc{\nabla^2 \sigma_{kijk}}{\sigma_{ij}} + \prodesc{\nabla\sigma_{ijk}}{\nabla\sigma_{kij}} \right] = \\
&= \abs{\sigma}^2(1+K) + \sum_{ijk} \left[ \prodesc{\nabla^2 \sigma_{ikjk}}{\sigma_{ij}} + \prodesc{\nabla\sigma_{ijk}}{\nabla\sigma_{kij}} \right],
\end{split}
\]
where we have used Ricci identity in the second equality. Using again Codazzi equation 
\[
\sum_{k,j} \prodesc{\nabla \sigma_{kjk}}{\sigma_{ij}} = \sum_{k,j} \prodesc{\nabla \sigma_{jkk}}{\sigma_{ij}} - \frac{1}{2}C_2\prodesc{\nabla C_2}{e_i} = -\frac{1}{4} \prodesc{\nabla C_2^2}{e_i}.
\]
Deriving with respect to $e_i$ in this equations we get
\[
\sum_{ijk} \prodesc{\nabla^2\sigma_{ikjk} }{\sigma_{ij}} = -\sum_{ijk} \prodesc{\nabla\sigma_{kjk}}{\nabla\sigma_{iij}} - \frac{1}{4}\Delta(C_2^2),
\]
and finally we obtain
\[
\begin{split}
\frac{1}{2}\Delta(|\sigma|^2 + C_2^2) &= (1+K)|\sigma|^2 + \sum_{ijk}\bigl[ \prodesc{\nabla\sigma_{ijk}}{\nabla\sigma_{kij}} - \prodesc{\nabla\sigma_{kjk}}{\nabla\sigma_{iij}}\bigr] = \\
&= (1+K)|\sigma|^2 + \bigl[ |\nabla\sigma|^2 - \frac{1}{2}C_2^2(1-C_2^2)\bigr] - \bigl[\frac{1}{2}C_2^2(1-C_2^2)\bigr].
\end{split}
\]
where we have used again Codazzi equation in the second equality.
From Gauss equation we prove the claim.

Now, since $0 \leq K \leq 1/2$, we get that $\Delta(\abs{\sigma}^2 + C_2^2) \geq 0$ and hence $\abs{\sigma}^2 + C_2^2$ must be a constant function. Finally, from $\Delta(\abs{\sigma}^2 + C_2^2) = 0$ we deduce that $C_2 = 0$ and $K = 1/2$. Proposition~\ref{prop:totallygeodesic} finishes the proof.
\end{proof}

To finish this section, the following theorem states a rigidity result for the functions $K \pm K^\perp$.

\begin{theorem}\label{thm:curvatura-normal-desigualdad}
Let $\Phi:\Sigma\rightarrow\s^2\times\s^2$ be a minimal immersion of a compact orientable surface
$\Sigma$. Then:
\begin{enumerate}[(i)]
\item $K\pm K^\perp\geq 0$ if and only if either $\Phi$ is a complex immersion and so $K \pm K^\perp = 1$ or $\Phi$ is Lagrangian and so $K\pm K^\perp=0$.
\item $K\pm K^\perp<0$ can not happen.
\item If $\Sigma$ is a torus with $K^\perp = 0$ then $\Phi$ is non-full, i.e.,  $\Phi(\Sigma)$ is contained in a totally geodesic hypersurface of $\s^2 \times \s^2$.
\end{enumerate}
\end{theorem}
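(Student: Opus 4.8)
The three parts all turn on the sign of $a_j:=K+(-1)^{j+1}K^\perp$, which by Lemma~\ref{lm:properties-fundamental-data}.\eqref{eq:modulo-f} satisfies $a_j\le C_j^2\le 1$, with $a_j=1$ exactly when $\Phi$ is complex with respect to $J_j$. Summing the two signs in Lemma~\ref{lm:properties-fundamental-data}.\eqref{eq:laplacian-logarithm} gives the master equation
\[
\Delta\log(1-C_j^2)=2a_j=2\bigl(K+(-1)^{j+1}K^\perp\bigr),
\]
valid off the isolated complex points. The statement is symmetric under interchanging the two K\"ahler structures (i.e.\ $C_1\leftrightarrow C_2$, $K^\perp\leftrightarrow -K^\perp$), so the plan is to treat the $+$ case throughout, working with $a_1=K+K^\perp$ and the function $C_1$; the $-$ case is identical with $C_2$.

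For (i) I would run a maximum principle on $h=\log(1-C_1^2)$. If $\Phi$ is complex with respect to $J_1$ then $a_1=1\ge0$, so assume it is not. Integrating the master equation and recording the $-\infty$ logarithmic singularities of $h$ at the complex points (equivalently, using the zero counts $N_1^\pm$ of Lemma~\ref{lm:properties-fundamental-data}.\eqref{item:formulas-caracteristica-fibrado-normal}, which give $\int_\Sigma a_1\,dA=-\pi(N_1^++N_1^-)\le0$) together with the hypothesis $a_1\ge0$ forces $a_1\equiv0$ \emph{and} $N_1^+=N_1^-=0$. Thus there are no complex points, $h$ is a smooth harmonic function on the compact $\Sigma$, hence constant, so $C_1^2<1$ is constant. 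Feeding $a_1\equiv0$ into Lemma~\ref{lm:properties-fundamental-data}.\eqref{eq:laplacian-gradient-C} gives $|\nabla C_1|^2=(1-C_1^2)C_1^2$, which for constant $C_1^2$ vanishes only if $C_1\equiv0$; that is, $\Phi$ is Lagrangian with respect to $J_1$ and $a_1=0$. With the trivial converse this proves (i).

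For (ii), suppose $a_1=K+K^\perp<0$ everywhere; then $\Phi$ is not complex with respect to $J_1$. The key point is that Lemma~\ref{lm:properties-fundamental-data}.\eqref{eq:laplacian-gradient-C} now reads $|\nabla C_1|^2=(1-C_1^2)(C_1^2-a_1)$ with $C_1^2-a_1>0$, so every critical point of $C_1$ has $C_1^2=1$, i.e.\ is one of the isolated complex points. Hence $C_1$ is non-constant and attains its maximum $1$ and minimum $-1$ only at these points, so each zero of the gradient field $\nabla C_1$ is a strict local extremum and has Poincar\'e--Hopf index $+1$. Therefore $\chi(\Sigma)=\#\{\text{complex points}\}>0$, $\Sigma$ has genus $0$, and Proposition~\ref{prop:genus}.(iii) makes $\Phi$ complex --- contradicting $a_1<0$. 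I expect this topological step (excluding saddles and invoking Poincar\'e--Hopf) to be the main obstacle, as it is where the global hypothesis is genuinely used.

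For (iii), let $\Sigma$ be a torus with $K^\perp=0$. First $\Phi$ cannot be complex: that would force $K=1$ by Lemma~\ref{lm:properties-fundamental-data}.\eqref{eq:modulo-f}, contradicting $\int_\Sigma K\,dA=0$; so Proposition~\ref{prop:genus}.(iv) gives no complex points and I may invoke Proposition~\ref{prop:sinh-Gordon} to write $C_1=\tanh(v-w)$, $C_2=\tanh(v+w)$. Setting $p=v+w$ and $q=v-w$, the identities in the proof of Proposition~\ref{prop:sinh-Gordon} yield $\Delta p=-\tanh p$ and $\Delta q=-\tanh q$, while $K^\perp=\tfrac12\Delta\log(\cosh p/\cosh q)=0$ forces $\cosh p=e^{c_0}\cosh q$ for a constant $c_0$. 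Evaluating $\Delta p=-\tanh p$ at a maximum and a minimum of $p$ shows $\max p\ge0\ge\min p$, so $p$ must vanish somewhere; but $c_0\ne0$ would bound $|p|$ (or $|q|$) away from $0$, a contradiction. Hence $c_0=0$, giving $C_1^2=C_2^2$, and Proposition~\ref{prop:curvaturanormalnula} shows $\Phi$ is non-full. Finally, the symmetry noted above lets the $C_2$-versions of (i) and (ii) cover the remaining sign.
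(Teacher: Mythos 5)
Parts (i) and (iii) of your argument are sound. In (i) you replace the paper's observation that $\Delta(1-C_j^2)\geq 0$ (which follows directly from Lemma~\ref{lm:properties-fundamental-data} and makes $1-C_j^2$ constant) by an integral identity for $\Delta\log(1-C_j^2)$ using the zero counts $N_j^{\pm}$; both routes work and are of comparable length. Your (iii) is genuinely different from the paper's and arguably cleaner: the paper also derives $(1-C_1^2)=a(1-C_2^2)$ from harmonicity, but then excludes $a\neq 1$ by a second computation showing $\Delta(C_1^2-C_2^2)=-(C_1^2-C_2^2)(3|\sigma|^2+C_1^2+C_2^2)$, whereas your sinh--Gordon substitution, the sign evaluation of $\Delta p=-\tanh p$ at the extrema of $p$ and $q$, and the resulting forcing of $c_0=0$ reach $C_1^2=C_2^2$ (and hence Proposition~\ref{prop:curvaturanormalnula}) more directly.

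The genuine gap is in (ii), at the final step. From genus $0$ and Proposition~\ref{prop:genus}.(iii) you conclude that $\Phi$ is complex and declare this to contradict $K+K^\perp<0$. But \emph{complex} means complex with respect to $J_1$ \emph{or} $J_2$. Complexity with respect to $J_1$ does give $K+K^\perp=1$, a contradiction; complexity with respect to $J_2$ only gives $K-K^\perp=1$, which is perfectly compatible with $K+K^\perp<0$ (it merely says $K<1/2$). Ruling out this second case is in fact the bulk of the paper's proof of (ii): there one notes that $K<1/2$ together with the Gauss equation forces $\sigma$ to be nowhere zero, computes the degree of $v\mapsto\sigma(v,v)/|\sigma(v,v)|$ and applies \cite{AFR} to get $\chi^\perp=4(-1)^{i+1}$, hence $A=12\pi$ by integrating $K+(-1)^{i+1}K^\perp=1$, and contrasts this with $A=8\pi|d_1|$ obtained from $N_j^+=N_j^-$ via Lemma~\ref{lm:properties-fundamental-data}.(iv) and Proposition~\ref{prop:genus}.(i). (Your Morse-theoretic count actually yields the equality $N_j^+=N_j^-$ needed for that last step, but you do not exploit it.) As written, your proof of (ii) stops exactly where the real work begins, so the statement is not established.
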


\begin{proof}
(i) We suppose that $K+(-1)^{j+1}K^\perp\geq 0$. Then, from Lemma~\ref{lm:properties-fundamental-data}.\eqref{eq:laplacian-gradient-C} we obtain that
\[
\frac{1}{2}\Delta(1-C_j^2)=(K+(-1)^{j+1}K^\perp)(1-C_j^2)+2C_j^2(C_j^2-(K+(-1)^{j+1}K^\perp)).
\]
Now, since $K + (-1)^{j+1}K^\perp \leq C_j^2$ from Lemma~\ref{lm:properties-fundamental-data}.\eqref{eq:modulo-f}, we get $\Delta(1-C_j^2)\geq 0$. Hence $C_j$ is a constant function and the previous equation ensures that
\[
(K+(-1)^{j+1}K^\perp)(1-C_j^2)=C_j^2(C_j^2-(K+(-1)^{j+1}K^\perp))=0.
\]
Now, there are only two possibilities:  $K+(-1)^{j+1}K^\perp=0$ and $C_j=0$ or $K+(-1)^{j+1}K^\perp=1$ and $C_j^2=1$.

Conversley, if $\Phi$ is a complex immersion then, from Lemma~\ref{lm:properties-fundamental-data}.\eqref{eq:modulo-f}, $K + (-1)^{j+1}K^\perp = 1$. If $\Phi$ is a Lagrangian immersion then, from Lemma~\ref{lm:properties-fundamental-data}.\eqref{eq:laplacian-gradient-C} it follows easily that $K + (-1)^{j+1}K^\perp = 0$.

(ii) We suppose that $K+(-1)^{j+1}K^\perp<0$ for some $j \in \{1, 2\}$. Firtly, $\Phi$ cannot be a complex immersion with respect to $J_j$ since in that case, from Lemma~\ref{lm:properties-fundamental-data}.\eqref{eq:modulo-f}, $K + (-1)^{j+1}K^\perp = 1$ which is a contradiction. Moreover, using again Lemma~\ref{lm:properties-fundamental-data}.\eqref{eq:laplacian-gradient-C}, any critical point $p$ of $C_j$ satisfy $C_j^2(p) = 1$ and hence $C_j$ is a non-constant function with only maxima and minima as critical points. The Morse theory says that $\Sigma$ is a sphere with $N_j^+ = N_j^-$. Hence, from Proposition~\ref{prop:genus}.(ii), $\Phi$ is a complex immersion with respect to $J_i$, $i \neq j$.

Now, from Lemma~\ref{lm:properties-fundamental-data}.\eqref{eq:modulo-f}, $K + (-1)^{i+1}K^\perp = 1$  and, since $K + (-1)^{j+1}K^\perp < 0$ we have that $K < 1/2$. Now, Gauss equation ensures that the second fundamental form $\sigma$ has no zeroes.

We are going to compute the Euler characteristic of the normal bundle. We will follow the notation of Section~\ref{sec:structure-equation}. First, since $f_i = 0$ (cf.~ Proposition~\ref{prop:fundamental-equations}), we get
\[
\begin{split}
\prodesc{\sigma(e_1, e_1)}{\tilde{N}} &= (-1)^{i+1}\prodesc{\sigma(e_1, e_2)}{N} = a, \\
\prodesc{\sigma(e_1, e_1)}{N} &= (-1)^{i}\prodesc{\sigma(e_1, e_2)}{\tilde{N}} = b,
\end{split}
\]
where $e_1 = e^{-u}\partial_x$ and $e_2 = e^{-u}\partial_y$. Let $p \in \Sigma$ and $v = \cos \theta e_1 + \sin \theta e_2$ a unit vector to $\Sigma$, then
\[
\sigma(v, v) = (a \cos 2\theta + b(-1)^i\sin 2\theta)\tilde{N} + (b \cos 2\theta + a(-1)^{i+1}\sin 2\theta)N,
\]
and hence $|\sigma(v,v)|^2 = a^2 + b^2 = \frac{1}{4}|\sigma|^2 > 0$. Therefore the well-defined map $F_p: \s^1 \subset T_p\Sigma \rightarrow (\s^1)^\perp \subset T_p^\perp \Sigma$ given by $F_p(v) = \frac{\sigma(v,v)}{|\sigma(v,v)|}$ has degree $2(-1)^{i+1}$ for any $p \in \Sigma$, because $\{\tilde{N}, N\}$ is an oriented frame on $T^\perp \Sigma$. Finally, we can apply~\cite[Theorem 1]{AFR} to conclude that $\chi^\perp = \deg(F)\chi = 4(-1)^{i+1}$.

Since $K + (-1)^{i+1}K^\perp = 1$ and $\chi = 2$ then $A = 4\pi + (-1)^{i+1}2\pi\chi^\perp = 12\pi$. On the other hand, since $N_j^+ = N_j^-$, Lemma~\ref{lm:properties-fundamental-data}.\eqref{item:formulas-caracteristica-fibrado-normal} affirms that $d_1 + (-1)^{j+1}d_2 = 0$ and so $A = 4\pi(\abs{d_1} + \abs{d_2}) = 8\pi \abs{d_1}$ (cf.\ Proposition~\ref{prop:genus}.(i)). Both expresions for the area lead to a contradiction.

(iii) Let supposet that $\Sigma$ is a torus and $K^\perp =0$. From Proposition~\ref{prop:genus}.(iv), either $\Phi$ is a complex immersion or $C_1^2, C_2^2 < 1$. The first case cannot happen since if $\Phi$ is complex then $1 = K \pm K^\perp = K$ (cf.\ Lemma~\ref{lm:properties-fundamental-data}.\eqref{eq:modulo-f}). Hence $C_1^2, C_2^2 < 1$. Now, from Lemma~\ref{lm:properties-fundamental-data}.\eqref{eq:laplacian-logarithm},
\[
\Delta \log\frac{1-C_1^2}{1-C_2^2} = 4K^\perp = 0.
\]
Hence the harmonic function $\log\frac{1-C_1^2}{1-C_2^2}$ is constant and so $(1-C_1^2) = a(1-C_2^2)$, $a \in \R^+$.

Now, $C_1^2(p) = C_2^2(p)$ at a point $p \in \Sigma$ if and only if $a = 1$. In this case, we get $C_1^2 \equiv C_2^2$ and Proposition~\ref{prop:curvaturanormalnula} proves the result. If $a \neq 1$ then $C_1^2 - C_2^2$ is either a positive or a negative function. On the other hand, using Lemma~\ref{lm:properties-fundamental-data}.\eqref{eq:laplacian-gradient-C} and the Gauss equation, we get
\[
\Delta (C_1^2 - C_2^2) = 2(C_1^2 - C_2^2)\bigl(3K - 2(C_1^2 + C_2^2) \bigr) = -(C_1^2 - C_2^2)(3|\sigma|^2 + C_1^2 + C_2^2),
\]
where $\sigma$ is the second fundamental form of $\Phi$. Hence $C_1^2 - C_2^2$ must be constant and from $\Delta(C_1^2 - C_2^2) = 0$ we deduce that $3|\sigma|^2 + C_1^2 + C_2^2 = 0$, that is, $C_1 = C_2 = 0$ which is a contradiction since $C_1^2 \neq C_2^2$.
\end{proof}


\begin{thebibliography}{99}

\bibitem[AFR]{AFR}
A.\ C. Asperti, D. Ferus and L. Rodríguez.
\newblock Surfaces with nonzero normal curvature.
\newblock \emph{Rend. Sci. Fis. Mat. Lincei} \textbf{73} (1982), 109--115.

\bibitem[B]{B}
R. Bryant.
\newblock Conformal and minimal immersions of compact surfaces into the $4$-sphere.
\newblock \emph{J.\ Differential Geometry} \textbf{17} (1982), 455--473.

\bibitem[CU]{CU}
I. Castro and F. Urbano.
\newblock Minimal Lagrangian surfaces in $\s^2\times\s^2$.
\newblock {\em Comm. Anal. and Geom.} {\bf 15} (2007), 217--248.

\bibitem[C]{C}
S.\ Y. Cheng.
\newblock Eigenfunctions and nodal sets.
\newblock \emph{Comment. Math. Helv.} \textbf{51} (1976), 43--55.

\bibitem[CN]{CN}
B.\ Y. Chen and T. Nagano.
\newblock Totally geodesic submanifolds of symmetric spaces I.
\newblock \emph{Duke Math. J.} \textbf{44} (1997), 745--755.

\bibitem[EGT]{EGT}
J.-T. Eschenburg, I. V. Guadalupe and R. Tribuzy.
\newblock The fundamental equations of minimal surfaces in $\mathbb{C}P^2$.
\newblock \emph{Math. Ann.} \textbf{270} (1985), 571--598.

\bibitem[HKS]{HKS}
L.\ Hauswirth, M.\ Kilian and M.U.\ Schmidt.
\newblock Finite type minimal annuli in $\mathbb{S}^2 \times \mathbb{R}$.
\newblock arXiv:1210.5606v1 [math.DG]

\bibitem[H]{H}
N.\ J.\ Hitchin.
\newblock Harmonic maps from a $2$-torus to the $3$-sphere.
\newblock {\em J. Differential Geometry} {\bf 31} (1990), 627--710.

\bibitem[L]{L}
H.\ B.\ Lawson.
\newblock Complete minimal surfaces in $\s^3$.
\newblock {\em Ann. of Math.} {\bf 92} (1970), 335--374.

\bibitem[LY]{LY}
P.\ Li and S-T.\ Yau.
\newblock A new conformal invariant and its applications to the Willmore conjecture and the first eigenvalue of compact surfaces.
\newblock {\em Invent. Math.} {\bf 69} (1982), 269--291

\bibitem[MW]{MW}
M.\ J.\ Micallef and J.\ G.\ Wolfson.
\newblock The second variation of area of minimal surfaces in four-manifolds.
\newblock \emph{Math. Ann.} \textbf{295} (1993), 245--267.

\bibitem[MU]{MU}
S.\ Montiel and F.\ Urbano.
\newblock Second variation of superminimal surfaces into self-dual Einstein four-manifolds.
\newblock \emph{Trans. Amer. Soc. Math.}  \textbf{349} (1997), 2253--2269.

\bibitem[RV]{RV}
E.A. Ruh and J. Vilms.
\newblock The tension field of the Gauss map.
\newblock {\em Trans. Amer. Math. Soc.} {\bf 149} (1970), 569--573.

\bibitem[SY]{SY}
R. Schoen and S.T. Yau.
\newblock On univalent harmonic maps between surfaces.
\newblock {\em Invent. Math.} {\bf 44} (1978), 265--278.

\bibitem[TU1]{TU1}
F. Torralbo and F. Urbano.
\newblock Surfaces with parallel mean curvature vector in $\s^2 \times \s^2$ and $\h^2 \times \h^2$.
\newblock {\em Trans. Amer. Math. Soc.} {\bf 364} (2012), 785--813.

\bibitem[TU2]{TU2}
F. Torralbo and F. Urbano.
\newblock Compact stable minimal submanifolds.
\newblock {\em Proc. Amer. Math. Soc.} To appear.

\bibitem[W]{W}
J.G. Wolfson.
\newblock Minimal surfaces in Kähler surfaces and Ricci curvature.
\newblock \emph{J. Differential Geometry} \textbf{29} (1989), 281--294.

\end{thebibliography}
\end{document}